\newtheorem{theorem}{Theorem}[section]
\newtheorem{lemma}{Lemma}[section]
\newtheorem*{definition}{Definition}
\newtheorem{remark}{Remark}
\newtheorem{corollary}{Corollary}
\journal{}
\begin{document}

\begin{frontmatter}

\title{An efficient second-order energy stable BDF scheme for the space fractional Cahn-Hilliard equation}

\author[address1,address2]{Yong-Liang Zhao}
\ead{ylzhaofde@sina.com}

\author[address3]{Meng Li\corref{correspondingauthor}}
\cortext[correspondingauthor]{Corresponding author}
\ead{limeng@zzu.edu.cn}

\author[address2]{Alexander Ostermann}
\ead{alexander.ostermann@uibk.ac.at}

\author[address4]{Xian-Ming Gu}
\ead{guxianming@live.cn}

\address[address1] {School of Mathematical Sciences, \\
University of Electronic Science and Technology of China, \\
Chengdu, Sichuan 611731, P.R. China}
\address[address2]{Department of Mathematics, University of Innsbruck, Technikerstra{\ss}e 13, Innsbruck 6020, Austria}
\address[address3] {School of Mathematics and Statistics, Zhengzhou University,
Zhengzhou, Henan 450001, P.R. China}
\address[address4] {School of Economic Mathematics/Institute of Mathematics, \\
Southwestern University of Finance and Economics, Chengdu, Sichuan 611130, P.R. China}

\begin{abstract}
The space fractional Cahn-Hilliard phase-field model is more adequate and accurate in the description of the
formation and phase change mechanism than the classical Cahn-Hilliard model.
In this article, we propose a temporal second-order energy stable scheme for the space fractional Cahn-Hilliard model.
The scheme is based on the second-order backward differentiation formula in time and a finite difference method in space.
Energy stability and convergence of the scheme are analyzed,
and the optimal convergence orders in time and space are illustrated numerically.
Note that the coefficient matrix of the scheme is a $2 \times 2$ block matrix with a Toeplitz structure in each block.
Combining the advantages of this special structure with a Krylov subspace method,
a preconditioning technique is designed to solve the system efficiently.
Numerical examples are reported to illustrate the performance of the preconditioned iteration.
\end{abstract}

\begin{keyword}
Space fractional Cahn-Hilliard equation\sep Energy stability\sep Convergence\sep Newton's method\sep
Krylov subspace method\sep Preconditioning
\MSC[2010] 65M06\sep 65M12\sep 65N06
\end{keyword}

\end{frontmatter}


\section{Introduction}
\label{sec1}

The classical Cahn-Hilliard (CH) equation originally proposed by Cahn and Hilliard \cite{cahn1958free}:
\begin{equation*}
\partial_t \phi = M \Delta \mu, \qquad
\mu = -\varepsilon^2 \Delta \phi + F'(\phi)~
\left( \textrm{where}~F(\phi) = \frac{1}{4} \phi^4 - \frac{1}{2} \phi^2 \right)
\end{equation*}
is well-known in modeling spinodal decomposition in a binary alloy, see \cite{bertozzi2006inpainting,cohen1981generalized,klapper2006role,farshbaf2015phase} for other applications.
Its free energy functional has the form
$
\int_{\Omega} \left( \frac{\varepsilon^2}{2} \left| \nabla \phi \right| ^2 + F(\phi) \right) d \textbf{x}
$, 
where $\Omega \subset \mathbb{R}^d~(d \in \mathbb{N}^{+})$.
In phase-field models with the double-well potential $F(\phi)$,
$\mu$ is treated as the indicator of the concentration (or volume fraction) of one fluid at the location $\textbf{x}$
in the immiscible mixture with the second fluid \cite{wang2019finite}.
The key feature of the CH equation is that the continuous scalar field $\phi$ describes surfaces and interfaces implicitly.
It takes constant values in the bulk phases but varies rapidly across its diffuse fronts.
When solving the CH equation numerically, finite difference
and finite element methods are the most common discretization methods,
see the related Refs.~\cite{barrett1999finite,elliott1989second,wise2007solving,feng2016analysis,
yan2018second,cheng2019energy}
and the references therein.

In the original formulation of the physical model \cite{cahn1958free}, the term $\Delta \phi$ in the CH equation,
which describes long-range interactions among particles, should be replaced by a spatial convolution or a nonlocal integral term.
However, in the subsequent mathematical literature, such a nonlocal term has been substituted with $\Delta \phi$ mainly for analytical reasons.
Some important information coming from long-range interactions may be lost through this replacement.
Under this perspective, using the fractional Laplacian operator $(-\Delta)^{\alpha/2}$
appears to be more adherent to the physical setting.
Along this line, several fractional versions of the CH equation have been proposed in \cite{wang2019finite,abels2015cahn,akagi2016fractional,ainsworth2017analysis,ainsworth2017well,weng2017fourier}.
Ainsworth and Mao \cite{ainsworth2017analysis} proposed the Fourier-Galerkin scheme
to approximate the space fractional CH equation.
Weng et al.~\cite{weng2017fourier} developed an unconditionally energy stable Fourier spectral scheme to solve
the space fractional CH phase-field model \cite{akagi2016fractional} numerically.
Zhai et al.~\cite{zhai2019numerical} derived a fast Strang splitting method to solve the fractional CH equation.
Moreover, Liu et al.~\cite{liu2018time} derived an efficient Fourier spectral scheme
to approximate the time fractional CH equation.
Tang et al.~\cite{tang2019energy} proposed a class of  finite difference schemes, which can inherit the energy stability,
to solve the time-fractional phase-field models, e.g., the time fractional CH equation.

In this paper, we design an efficient finite difference scheme
for numerically solving the space fractional CH (SFCH) model equipped with homogeneous boundary conditions \cite{wang2019finite, akagi2016fractional}:
\begin{equation}
\begin{cases}
\partial_t \phi(x,t) = -(-\Delta)^{\alpha/2} \mu(x,t), & \textrm{in} \quad \Omega \times (0,T], \\
\mu(x,t) = \phi^3(x,t) - \phi(x,t) + \varepsilon^2 (-\Delta)^{\alpha/2} \phi(x,t),
& \textrm{in} \quad \Omega \times (0,T], \\
\phi(x,t) = \mu(x,t) = 0, & \textrm{in} \quad \mathbb{R} \setminus \Omega \times (0,T],\\
\phi(x,0) = \phi_0(x), &  \textrm{in} \quad \Omega.
\end{cases}
\label{eq1.1}
\end{equation}
Here $1 < \alpha < 2$, $\Omega = (-L,L) \subset \mathbb{R}$ and $\varepsilon$ (a positive constant) is a length scale parameter.
Note that the model \eqref{eq1.1} is a special case of \cite{wang2019finite, akagi2016fractional}.
For $\alpha = 2$, Eq.~\eqref{eq1.1} becomes the classical CH equation.
It is worth emphasising that in this work, we consider the exterior Dirichlet boundary condition:
$\phi(x,t) = \mu(x,t) = 0~\mathrm{in}~\mathbb{R} \setminus \Omega \times (0,T]$
instead of the usual one $\phi(x,t) = \mu(x,t) = 0~\textrm{on}~\partial \Omega \times (0,T]$.
A reason for considering such a boundary condition is that in a microscopic view, a particle may not stop on the boundary
and may directly jump to the outside of the system.

Energy stability plays an essential role in the accuracy
of long-time numerical simulations of the (fractional) CH equation.
Wang et al.~\cite{wang2019finite} proposed a linear finite element algorithm for the SFCH model \eqref{eq1.1},
which, however, is only first-order accurate in time.
Designing a temporal high order scheme for such a model might be difficult.
In this work, we derive and analyze an alternative second-order energy stable scheme for the SFCH model \eqref{eq1.1}.
The new scheme is based on the second-order backward differentiation formula (BDF2).
In this scheme, the nonlinear and surface diffusion terms are updated implicitly,
whereas the concave diffusion term is approximated by a second-order accurate explicit extrapolation formula.
However, this explicit extrapolation cannot assure energy stability.
To save the energy stability of the numerical scheme,
we add a second-order Douglas-Dupont-type regularization term, i.e., $\sigma \tau (-\Delta)^{\alpha/2} (\phi^{j + 1} - \phi^{j})$,
where $\tau$ is the temporal step size.
A careful analysis suggests that the energy stability is guaranteed under the mild condition $\sigma \geq 1/16$.

It is easy to see that our scheme results in a nonlinear time-stepping system
with a $2 \times 2$ block matrix.
Because of the nonlocal property of the fractional Laplace operator,
traditional methods for linear problems (e.g., Gaussian elimination) for solving such a system
need $\mathcal{O}(N^3)$ operations per iteration step and the storage requirement is of $\mathcal{O}(N^2)$,
where $N$ is the number of space grid points.
However, we can use that each block of the matrix has a Toeplitz structure.
Thanks to this structure, matrix-vector multiplications
can be computed in $\mathcal{O}(N \log N)$ operations via fast Fourier transforms (FFTs)
\cite{ng2004iterative, chan2007introduction}.
With this advantage, in each iterative step of a Krylov subspace method, the memory requirement and computational cost
are $\mathcal{O}(N)$ and $\mathcal{O}(N \log N)$, respectively.
Since the ill-conditioning of the nonlinear system
causes a slow convergence of the Krylov subspace method,
a preconditioning technique is designed to solve the problem efficiently.
For many other studies about Toeplitz-like systems,
see \cite{gu2017fast, li2018fast, lei2013circulant, zhao2018limited, zhao2020tempered, li2018fastamc} and the references therein.

The main contributions of this work can be summarized as follows:

(i) An energy stable scheme with second-order accuracy in time is proposed,
and its stability and convergence are analyzed.

(ii) Based on the special structure of the coefficient matrix,
we introduce a block lower triangular preconditioner $P_{L}$, whose $(2,2)$ block is a circulant matrix.
This accelerates the solution of the arising nonlinear system.
If the circulant matrix is replaced by the Schur complement (denoted as $S$),
the performance of $P_{L}$ will become better.
However, $S$ is a general dense matrix in our case, and the computation cost of $S^{-1}$ is very high.
To overcome this drawback, the Schur complement is replaced by a new Strang-type skew-circulant matrix,
see Section \ref{sec4} for details.
Numerical experiences show that this Strang-type skew-circulant matrix
performs slightly better than Strang's circulant preconditioner \cite{ng2004iterative, chan2007introduction}.

The rest of this paper is organized as follows. Some auxiliary notations and the energy function are introduced in Section \ref{sec2}.
Section \ref{sec3} derives our energy stable scheme and provides the stability and convergence analysis.
In Section \ref{sec4}, the block lower triangular preconditioner and the new Strang-type skew-circulant matrix
are designed.
Several numerical examples are provided in Section \ref{sec5} to verify the convergence order of our scheme
and to show the performance of the preconditioning technique. Some conclusions are drawn in Section \ref{sec6}.

\section{Preliminaries}
\label{sec2}

In this section the definition of the fractional Laplace operator $(-\Delta)^{\alpha/2}$ and the energy of \eqref{eq1.1} are given.
The frequently used definitions of $(-\Delta)^{\alpha/2}$ are based on Fourier representation or on singular integral representations,
see \cite{kwasnicki2017ten} for other equivalent definitions.
In this paper, different from the previous works \cite{ainsworth2017analysis,ainsworth2017well,weng2017fourier},
the following singular integral \cite{landkof1972foundations,samko1993fractional} is considered
for the definition of the one-dimensional fractional Laplacian.
\begin{definition}(\cite{landkof1972foundations,samko1993fractional})
For $1 < \alpha < 2$ and $\phi(x)$ in the Schwartz class $\mathcal{S}(\mathbb{R})$ of the rapidly decaying functions at infinity,
the fractional Laplacian of order $\frac{\alpha}{2}$ is defined as follows
\begin{equation}
(-\Delta)^{\alpha/2} \phi(x) =
\begin{cases}c_{1,\alpha} \textrm{P.V.}
\int_{\mathbb{R}} \frac{\phi(x) - \phi(y)}{\left| x - y \right|^{1 + \alpha}} dy, & x \in \Omega, \\
0, & x \in \mathbb{R} \setminus \Omega,
\end{cases}
\label{eq2.1}
\end{equation}
where
$c_{1,\alpha} = \frac{2^{\alpha - 1} \alpha \Gamma(\frac{\alpha + 1}{2})}{\sqrt{\pi} \Gamma(1 - \frac{\alpha}{2})}$
and P.V. represents the principal value integral:
\begin{equation*}
\textrm{P.V.} \int_{\mathbb{R}} \frac{\phi(x) - \phi(y)}{\left| x - y \right|^{1 + \alpha}} dy
= \lim\limits_{\epsilon \rightarrow 0}\int_{\mathbb{R}\setminus B_{\epsilon}(x)}
\frac{\phi(x) - \phi(y)}{\left| x - y \right|^{1 + \alpha}} dy.
\end{equation*}
Here $\Gamma(\cdot)$ is the Gamma function and $B_{\epsilon}(x)$ is a ball of radius $\epsilon$ centered at $x$.
\end{definition}
Denote $L_{x,2}^{0}(\mathbb{R}) = \Big\{ u: \mathbb{R} \times [0,T] \rightarrow \mathbb{R} \mid \forall t \in [0,T],
u(\cdot,t) \in L^{2}(\mathbb{R}) ~\textrm{and}~
u(\cdot,t) = 0~\textrm{a.e.~in}~ \mathbb{R} \setminus \Omega \Big\}$ and
$L_{x,4}^{0}(\mathbb{R}) = \Big\{ u: \mathbb{R} \times [0,T] \rightarrow \mathbb{R} \mid \forall t \in [0,T],
u(\cdot,t) \in L^{4}(\mathbb{R}) ~\textrm{and}~
u(\cdot,t) = 0~\textrm{a.e.~in}~ \mathbb{R} \setminus \Omega \Big\}$.
For any $w,u \in L_{x,2}^{0}(\mathbb{R})$, their inner product is a function of  $t$ and defined by
\begin{equation*}
\langle w,u \rangle_x = \int_{\Omega} w(\xi,t) u(\xi,t) d\xi, \quad \forall t \in [0,T].
\end{equation*}
Correspondingly, the norm of $u \in L_{x,2}^{0}(\mathbb{R})$ can be defined as $\| u \|_{x,2}^2 = \langle u,u \rangle_x$.
For each $u \in L_{x,4}^{0}(\mathbb{R})$, the norm is given by
\begin{equation*}
\| u \|_{x,4}^4 = \int_{\Omega} \left| u(\xi,t) \right|^4 d\xi, \quad \forall t \in [0,T].
\end{equation*}
With these notations, the energy of model \eqref{eq1.1} can be defined as
\begin{equation}
E(\phi) = \frac{1}{4} \| \phi \|_{x,4}^{4} - \frac{1}{2} \| \phi \|_{x,2}^{2} + \frac{1}{4} | \Omega |
+ \frac{\varepsilon^2}{2} \left\langle (-\Delta)^{\alpha/2} \phi(x),\phi(x) \right\rangle_x.
\label{eq2.2}
\end{equation}
\begin{remark}\label{remark1}
According to \cite{cai2019riesz}, we know that the fractional Laplacian and Riesz fractional derivative are equivalent,
i.e., if $\phi(x) \in \mathcal{S}(\mathbb{R})$, it holds that
\begin{equation*}
-(-\Delta)^{\alpha/2} \phi(x) = \frac{\partial^\alpha \phi(x)}{\partial |x|^\alpha}
=
-\frac{1}{2 \cos(\frac{\pi \alpha}{2})\Gamma(2 - \alpha)}
\frac{d^2}{d x^2} \int_{-\infty}^{\infty} \left| x - \eta \right|^{1 - \alpha} \phi(\eta) d\eta,
\quad 1 < \alpha < 2, ~ x \in \Omega.
\end{equation*}
Then, we have \cite{macias2017structure}
\begin{equation*}
\left\langle -\frac{\partial^\alpha \phi(x)}{\partial |x|^\alpha},\phi(x) \right\rangle_x
= \left\langle \Xi^\alpha \phi(x),\Xi^\alpha \phi(x) \right\rangle_x,
\end{equation*}
where $\Xi^\alpha$ is the unique square root operator of $\frac{\partial^\alpha}{\partial |x|^\alpha}$.
With this in mind, the energy can be rewritten as
\begin{equation*}
E(\phi) = \frac{1}{4} \| \phi \|_{x,4}^{4} - \frac{1}{2} \| \phi \|_{x,2}^{2} + \frac{1}{4} | \Omega |
+ \frac{\varepsilon^2}{2} \| \Xi^\alpha \phi \|_{x,2}^2.
\end{equation*}
\end{remark}
Throughout this paper, unless otherwise specified, $C$ with and without subscript represent some positive constants.
\section{The numerical method for the SFCH model and its analysis}
\label{sec3}

In this section, the modified BDF2 scheme (mBDF2) in combination with a finite difference method is presented.
Its energy stability and convergence are proved.
\subsection{The fully discrete numerical scheme}
\label{sec3.1}

For two given positive integers $M$ and $N$, let $\tau = \frac{T}{M}$ and $h = \frac{2 L}{N}$
be the time step size and the spatial grid size, respectively. Then, the domain $\bar{\Omega} \times [0,T]$ can be
covered by the mesh $\bar{\omega}_{h \tau} = \bar{\omega}_{h} \times \bar{\omega}_{\tau}$,
where
$\bar{\omega}_{h} = \{ x_i = -L + ih, ~i = 0, 1, \cdots, N \}$ and
$\bar{\omega}_{\tau} = \{ t_j = j \tau, ~j = 0, 1, \cdots, M \}$.

In \cite{duo2018novel}, the authors rewrote \eqref{eq2.1} as the weighted integral of a weaker singular function
and approximated it by the weighted trapezoidal rule.
A vital step in their discretization is that a so-called splitting parameter $\gamma \in (\alpha,2]$ 
was introduced ($\gamma = 1 + \frac{\alpha}{2}$ in this work),
which plays an important role in the accuracy. According to their proposal,
the fractional Laplacian \eqref{eq2.1} at the node $(x_i, t_j)$ can be approximated as
\begin{equation}
(-\Delta)^{\alpha/2} \phi(x_i, t_j) = c_{h}^{(\alpha,\gamma)} \sum\limits_{k = 1}^{N - 1} g_{i - k}^{(\alpha)} \phi(x_k, t_j)
+ \mathcal{O}(h^p) =  -\delta_h^\alpha \phi(x_i, t_j) + \mathcal{O}(h^p),
\label{eq3.1}
\end{equation}
where $c_{h}^{(\alpha,\gamma)} = \frac{c_{1,\alpha}}{\nu h^\alpha}$ with $\nu = \gamma - \alpha$ and
\begin{equation*}
g_{k}^{(\alpha)} =
\begin{cases}
\sum\limits_{\ell = 1}^{N - 1} \frac{(\ell + 1)^\nu - (\ell - 1)^\nu}{\ell^\gamma}
+ \frac{N^\nu - (N - 1)^\nu}{N^\gamma} 
+ \frac{2 \nu}{\alpha N^\alpha}, & k = 0, \\
-\frac{(k + 1)^\nu - (k - 1)^\nu}{2 k^\gamma}, & k = 1,2,\cdots,N - 2, \\
g_{-k}^{(\alpha)}, & k = -1,-2, \cdots, 2 - N
\end{cases}
\end{equation*}
with the constant $\kappa_\gamma = 1$ for $\gamma = 1 + \frac{\alpha}{2}$.
The order $p \in (0,2]$ is determined by the spatial regularity of $\phi(x,t)$.
Let $\phi_i^j$ and $\mu_i^j$ represent the numerical approximations of $\phi(x_i, t_j)$ and $\mu(x_i, t_j)$, respectively.
Combining the space discretization \eqref{eq3.1} with the BDF2 formula, we get our mBDF2 scheme
for approximating Eq.~\eqref{eq1.1}
at the grid points $(x_i,t_j)~(1 \leq i \leq N - 1, 1 \leq j \leq M - 1)$:
\begin{equation}
\begin{cases}
\delta_t \phi_i^{j + 1} = \delta_h^\alpha \mu_i^{j + 1}, \\
\mu_i^{j + 1} = \left( \phi_i^{j + 1} \right)^3 - 2 \phi_i^{j} + \phi_i^{j - 1} - \varepsilon^2 \delta_h^\alpha \phi_i^{j + 1}
- \sigma \tau \delta_h^\alpha \left( \phi_i^{j + 1} - \phi_i^{j} \right),
\end{cases}
\label{eq3.2}
\end{equation}
where $\delta_t \phi_i^{j + 1} = \frac{3 \phi_i^{j + 1} - 4 \phi_i^j + \phi_i^{j - 1}}{2 \tau}$.

Comparing with the standard BDF2 scheme for Eq.~\eqref{eq1.1},
the concave diffusion term \cite{yan2018second} $- 2 \phi_i^{j} + \phi_i^{j - 1}$ is updated explicitly.
Moreover, the Douglas-Dupont-type regularization term $\sigma \tau \delta_h^\alpha \left( \phi_i^{j + 1} - \phi_i^{j} \right)$
is added to keep the energy stability of \eqref{eq3.2}. The later analysis shows that $\sigma \geq 1/16$.
Note that for the scheme \eqref{eq3.2}, $\phi_{i}^{1}$ is unknown.
To preserve the temporal second-order accuracy, it is approximated by Taylor expansion using neighbouring function values
\begin{equation*}
\phi_i^{1} = \phi_i^{0} + \tau \delta_h^\alpha \mu_i^{0} \quad\textrm{with}\quad
\mu_i^{0} = \left( \phi_i^0 \right)^3 - \phi_i^{0} - \varepsilon^2 \delta_h^\alpha \phi_i^{0}.
\end{equation*}
Such an approximation has the accuracy $\mathcal{O}(\tau^2 + h^p)$.

Next, the matrix form of the mBDF2 scheme \eqref{eq3.2} is stated.
Let $\bm{\phi}^j = \left[\phi_1^{j}, \phi_2^{j}, \cdots, \phi_{N - 1}^{j} \right]^T$,
$\bm{\mu}^j = \left[\mu_1^{j}, \mu_2^{j}, \cdots, \mu_{N - 1}^{j} \right]^T$ and
\begin{equation*}
G = -c_{h}^{(\alpha,\gamma)}
\begin{bmatrix}
g_0^{(\alpha)} & g_{-1}^{(\alpha)} & g_{-2}^{(\alpha)} & \cdots & g_{3 - N}^{(\alpha)}
& g_{2 - N}^{(\alpha)} \\
g_1^{(\alpha)} & g_0^{(\alpha)} & g_{-1}^{(\alpha)} & g_{-2}^{(\alpha)} & \cdots
& g_{3 - N}^{(\alpha)} \\
\vdots & g_1^{(\alpha)} & g_0^{(\alpha)} & \ddots & \ddots & \vdots \\
\vdots & \ddots & \ddots & \ddots & \ddots & g_{-2}^{(\alpha)} \\
g_{N - 3}^{(\alpha)} & \ddots & \ddots & \ddots & g_{0}^{(\alpha)} & g_{- 1}^{(\alpha)} \\
g_{N - 2}^{(\alpha)} & g_{N - 3}^{(\alpha)} & \cdots & \cdots & g_{1}^{(\alpha)}
& g_{0}^{(\alpha)}
\end{bmatrix}.
\end{equation*}
Then, for $1 \leq j \leq M - 1$, the matrix form of  Eq.~\eqref{eq3.2} is given by
\begin{equation*}
\begin{cases}
\delta_t \bm{\phi}^{j + 1} = G \bm{\mu}^{j + 1}, \\
\bm{\mu}^{j + 1} = \left( \bm{\phi}^{j + 1} \right)^3 - 2 \bm{\phi}^{j} + \bm{\phi}^{j - 1} - \varepsilon^2 G \bm{\phi}^{j + 1}
- \sigma \tau G \left( \bm{\phi}^{j + 1} - \bm{\phi}^{j} \right), \\
\bm{\phi}^{1} = \bm{\phi}^{0} + \tau G \bm{\mu}^{0}, \\
\bm{\mu}^{0} = \left( \bm{\phi}^{0} \right)^3 - \bm{\phi}^{0} - \varepsilon^2 G \bm{\phi}^{0}.
\end{cases}
\end{equation*}
\subsection{Analysis of the mBDF2 scheme}
\label{sec3.2}

Let $\mathring{\mathcal{V}}_h = \left\{ \bm{v} \in \mathbb{R}^{N + 1} | \bm{v} = \left( v_0,v_1, \cdots, v_{N} \right), v_0 = v_N = 0 \right\}$.
A discrete inner product and the corresponding norms are defined as
\begin{equation*}
(\bm{u},\bm{v}) = h \sum\limits_{k = 0}^{N} u_k v_k, \quad
\| \bm{u} \|_p^p = ( \left| \bm{u} \right|^p, \bm{1})~(1 \leq p < \infty) \quad \textrm{and} \quad
\| \bm{u} \|_\infty = \max\limits_{0 \leq k \leq N} \left| u_k \right| \quad \textrm{for}~
\forall \bm{u},\bm{v} \in \mathring{\mathcal{V}}_h.
\end{equation*}
Here $\left| \bm{u} \right|^p = \left( \left|  u_0 \right|^p, \left|  u_1 \right|^p, \cdots, \left|  u_N \right|^p \right)$
and $\bm{1} = \left( 1, 1, \cdots, 1 \right)$.

As a first main result, we prove the invertibility of $-G$ in the next lemma.
\begin{lemma}\label{lemma3.1}
The matrix $-G$ is a symmetric positive definite Toeplitz matrix.
\end{lemma}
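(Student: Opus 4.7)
The plan is to establish that $-G$ is Toeplitz, symmetric, and positive definite. The Toeplitz structure is immediate from the definition of $G$: the $(i,j)$ entry of $-G$ is $c^{(\alpha,\gamma)}_{h} g^{(\alpha)}_{i-j}$, depending only on $i-j$. Symmetry follows directly from the built-in relation $g^{(\alpha)}_{-k} = g^{(\alpha)}_{k}$. Since $c^{(\alpha,\gamma)}_{h} > 0$, positive definiteness of $-G$ reduces to that of the symmetric Toeplitz matrix generated by the sequence $(g^{(\alpha)}_{0}, g^{(\alpha)}_{1}, \ldots, g^{(\alpha)}_{N-2})$.

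I would prove positive definiteness via strict diagonal dominance and Gershgorin's disc theorem. The key sign observations rely on the fact that, with $\gamma = 1 + \alpha/2$ and $1 < \alpha < 2$, the splitting exponent $\nu = \gamma - \alpha = 1 - \alpha/2$ lies in $(0, 1/2)$ and is therefore strictly positive. Consequently $(k+1)^{\nu} > (k-1)^{\nu}$ for $k \geq 1$, yielding $g^{(\alpha)}_{k} < 0$ for $1 \leq k \leq N-2$, and the same sign for $k \leq -1$ by symmetry, while $g^{(\alpha)}_{0}$ is a sum of positive quantities. Thus the diagonal entries of $-G$ are strictly positive and the off-diagonals are non-positive, and for every row the sum of absolute values of the off-diagonal entries is bounded above by $2\sum_{k=1}^{N-2} |g^{(\alpha)}_{k}|$ (with strict inequality in all rows except possibly row $1$ and row $N-1$, which is good enough).

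The main computation is to check $g^{(\alpha)}_{0} > 2\sum_{k=1}^{N-2} |g^{(\alpha)}_{k}|$. Writing $2|g^{(\alpha)}_{k}| = \bigl((k+1)^{\nu} - (k-1)^{\nu}\bigr)/k^{\gamma}$ and substituting the defining expression for $g^{(\alpha)}_{0}$, the sums over $\ell = 1, \ldots, N-2$ and $k = 1, \ldots, N-2$ cancel term by term, leaving
\begin{equation*}
g^{(\alpha)}_{0} - 2\sum_{k=1}^{N-2} |g^{(\alpha)}_{k}| = \frac{N^{\nu} - (N-2)^{\nu}}{(N-1)^{\gamma}} + \frac{N^{\nu} - (N-1)^{\nu}}{N^{\gamma}} + \frac{2\nu}{\alpha N^{\alpha}},
\end{equation*}
which is strictly positive because $\nu > 0$. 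Hence $-G$ is strictly diagonally dominant with positive diagonal; being real symmetric, Gershgorin's theorem then places every eigenvalue strictly to the right of $0$, establishing positive definiteness.

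The only real obstacle I foresee is the bookkeeping in the cancellation above: one must line up the range $\ell = 1, \ldots, N-1$ appearing inside $g^{(\alpha)}_{0}$ with the range $k = 1, \ldots, N-2$ coming from the off-diagonal absolute values, and carefully identify which boundary terms survive. Beyond this index tracking, no sharp analytic estimates are required, and the positivity of the surviving remainder is automatic from $\nu > 0$.
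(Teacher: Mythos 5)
Your proposal is correct and follows essentially the same route as the paper: symmetry and Toeplitz structure read off from the definition, then positive definiteness via Gershgorin's circle theorem by bounding each row's off-diagonal sum by $2\sum_{k=1}^{N-2}|g_k^{(\alpha)}|$ and observing that this is strictly less than $g_0^{(\alpha)}$ because the surviving boundary terms and $\tfrac{2\nu}{\alpha N^{\alpha}}$ are positive for $\nu=1-\alpha/2>0$. Your explicit term-by-term cancellation is just a more detailed rendering of the paper's final strict inequality.
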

\begin{proof}
According to the definition of $G$, it is obvious that $-G$ is a symmetric Toeplitz matrix.
We only need to prove that all eigenvalues of $-G$ are positive.
Let $\lambda$ be an eigenvalue of $-G$. Then, based on Gerschgorin's circle theorem \cite{varga2004gervsgorin},
the $i$th Gershgorin disc of $-G$ is centered at $c_{h}^{(\alpha,\gamma)} g_0^{(\alpha)} > 0$
with radius
\begin{equation*}
r_i = c_{h}^{(\alpha,\gamma)} \sum_{\ell = i + 1 - N, \ell \neq 0}^{i - 1} \left| g_\ell^{(\alpha)} \right|
\leq c_{h}^{(\alpha,\gamma)}
 \sum\limits_{\ell = 1}^{N - 2} \frac{(\ell + 1)^\nu - (\ell - 1)^\nu}{\ell^\gamma}
< c_{h}^{(\alpha,\gamma)} g_0^{(\alpha)}.
\end{equation*}
Consequently, the matrix $-G$ is a symmetric positive definite Toeplitz matrix.
\end{proof}
On the other hand, for $\bm{u},\bm{v} \in \mathring{\mathcal{V}}_h$,
it holds $(\bm{u}, -\delta_h^\alpha \bm{v}) = -h \bm{u}^T G \bm{v}$.
Combining this with Lemma \ref{lemma3.1} shows that
$-\delta_h^\alpha$ is a positive self-adjoint operator on the Hilbert space $\mathring{\mathcal{V}}_h$.
It has a unique square root operator denoted by $\tilde{\delta}_h$ such that
$(\bm{u}, -\delta_h^\alpha \bm{v}) = (-\delta_h^{\alpha}\bm{u}, \bm{v})
= (\tilde{\delta}_h\bm{u}, \tilde{\delta}_h\bm{v})$.
Moreover, for each $\bm{u},\bm{v} \in \mathring{\mathcal{V}}_h$,
denote $(\bm{u},\bm{v})_{-1} := ((-\delta_h^{\alpha})^{-1}\bm{u}, \bm{v}) = (\bm{u}, (-\delta_h^{\alpha})^{-1} \bm{v})$
and $\| \bm{u} \|_{-1}^2 = (\bm{u},\bm{u})_{-1}$, where $(-\delta_h^{\alpha})^{-1}$ represents the inverse operator of $-\delta_h^{\alpha}$. For any grid function $\bm{\phi} \in \mathring{\mathcal{V}}_h$,
the discrete energy can be written as
\begin{equation}
E_h(\bm{\phi}) = \frac{1}{4} \| \bm{\phi} \|_{4}^{4} - \frac{1}{2} \| \bm{\phi} \|_{2}^{2}
+ \frac{1}{4} | \Omega |
+ \frac{\varepsilon^2}{2} \| \tilde{\delta}_h \bm{\phi}\|_2^2.
\label{eq3.3a}
\end{equation}
Now, we can show the energy stability and convergence of the mBDF2 scheme \eqref{eq3.2}.
\begin{theorem}\label{th3.1}
Let $\left\{ \bm{\phi}^{k} \right\}_{k = 0}^{M}$ be a solution of \eqref{eq3.2}.
For $k \geq 0$, define the modified energy as
\begin{equation*}
\mathcal{E}_h(\bm{\phi}^{k + 1}, \bm{\phi}^{k})
= E_h(\bm{\phi}^{k + 1}) + \frac{1}{4 \tau} \| \bm{\phi}^{k + 1} - \bm{\phi}^{k} \|_{-1}^2
+ \frac{1}{2} \| \bm{\phi}^{k + 1} - \bm{\phi}^{k} \|_{2}^2.
\end{equation*}
Then, for $\sigma \geq 1/16$, the modified energy $\mathcal{E}_h(\bm{\phi}^{k + 1}, \bm{\phi}^{k})$
is decaying for the mBDF2 scheme \eqref{eq3.2}, i.e.,
\begin{equation*}
\mathcal{E}_h(\bm{\phi}^{k + 1}, \bm{\phi}^{k})
\leq \mathcal{E}_h(\bm{\phi}^{k}, \bm{\phi}^{k - 1}), \quad k \geq 1.
\end{equation*}
\end{theorem}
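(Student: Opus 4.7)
The strategy is to test the scheme in the discrete $H^{-1}$-style inner product, assemble a one-step identity for the plain energy $E_h$, and use the Douglas--Dupont regularization to absorb a residual $\|\phi^{k+1}-\phi^k\|_2^2$ via an interpolation inequality; the Young weight that works at that absorption step is exactly what dictates the threshold $\sigma\ge 1/16$. Throughout I write $x:=\phi^{k+1}-\phi^k$ and $y:=\phi^k-\phi^{k-1}$. By Lemma~\ref{lemma3.1}, $-\delta_h^\alpha$ is invertible and self-adjoint, so I may use its square root $\tilde{\delta}_h$ and the $(-1)$-norm freely.

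Pairing the flux equation with $x$ and inverting $-\delta_h^\alpha$ gives $(\mu^{k+1},x)=-(\delta_t\phi^{k+1},x)_{-1}$. Testing the chemical-potential equation of \eqref{eq3.2} against $x$ and equating yields the scheme identity
\begin{equation*}
-(\delta_t\phi^{k+1},x)_{-1} = ((\phi^{k+1})^3,x) - (2\phi^k-\phi^{k-1},x) + \varepsilon^2(\tilde{\delta}_h\phi^{k+1},\tilde{\delta}_h x) + \sigma\tau\|\tilde{\delta}_h x\|_2^2.
\end{equation*}
For $E_h(\phi^{k+1})-E_h(\phi^k)$ I would use convexity of $\phi^4/4$ to bound the quartic difference by $((\phi^{k+1})^3,x)$, and the polarization $A^2-B^2=2A(A-B)-(A-B)^2$ for both the $L^2$ and $\tilde{\delta}_h$-norm pieces of \eqref{eq3.3a}. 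Substituting the scheme identity to eliminate the cubic and surface-diffusion terms, and writing $2\phi^k-\phi^{k-1}=\phi^k+y$ so that $(\phi^k,x)$ cancels the $L^2$-energy increment, leaves
\begin{equation*}
E_h(\phi^{k+1})-E_h(\phi^k)\le -(\delta_t\phi^{k+1},x)_{-1} + (y,x) - \bigl(\sigma\tau+\tfrac{\varepsilon^2}{2}\bigr)\|\tilde{\delta}_h x\|_2^2 - \tfrac{1}{2}\|x\|_2^2.
\end{equation*}

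For the BDF2 factor I use $3\phi^{k+1}-4\phi^k+\phi^{k-1}=3x-y$, and the polarization in $(\cdot,\cdot)_{-1}$ gives
\begin{equation*}
(\delta_t\phi^{k+1},x)_{-1} = \tfrac{5}{4\tau}\|x\|_{-1}^2 - \tfrac{1}{4\tau}\|y\|_{-1}^2 + \tfrac{1}{4\tau}\|x-y\|_{-1}^2.
\end{equation*}
Adding the $\mathcal{E}_h$-specific corrections $\tfrac{1}{4\tau}(\|x\|_{-1}^2-\|y\|_{-1}^2)$ and $\tfrac{1}{2}(\|x\|_2^2-\|y\|_2^2)$, the $\|y\|_{-1}^2$ and $\|x\|_2^2$ contributions cancel cleanly and I arrive at
\begin{equation*}
\mathcal{E}_h(\phi^{k+1},\phi^k)-\mathcal{E}_h(\phi^k,\phi^{k-1}) \le (y,x)-\tfrac{1}{2}\|y\|_2^2 -\tfrac{1}{\tau}\|x\|_{-1}^2 - \tfrac{1}{4\tau}\|x-y\|_{-1}^2 - \bigl(\sigma\tau+\tfrac{\varepsilon^2}{2}\bigr)\|\tilde{\delta}_h x\|_2^2 .
\end{equation*}

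The principal obstacle is the indefinite cross term $(y,x)$. Young's inequality reduces $(y,x)-\tfrac12\|y\|_2^2\le\tfrac12\|x\|_2^2$, and the residual $\tfrac12\|x\|_2^2$ must then be absorbed by the $\|x\|_{-1}^2$ and $\|\tilde{\delta}_h x\|_2^2$ dissipation. I would use the interpolation $\|x\|_2^2 = (\tilde{\delta}_h x,\tilde{\delta}_h^{-1}x)\le\|\tilde{\delta}_h x\|_2\,\|x\|_{-1}$ followed by weighted Young with parameter $\beta=\tau/8$, which produces
\begin{equation*}
\tfrac{1}{2}\|x\|_2^2 \le \tfrac{\tau}{16}\|\tilde{\delta}_h x\|_2^2 + \tfrac{1}{\tau}\|x\|_{-1}^2.
\end{equation*}
Provided $\sigma\ge 1/16$, this residual is exactly absorbed, and the remaining $-\tfrac{\varepsilon^2}{2}\|\tilde{\delta}_h x\|_2^2-\tfrac{1}{4\tau}\|x-y\|_{-1}^2\le 0$ delivers the claim. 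I expect this last absorption---choosing the Young weight so the $\|x\|_2^2$ is tightly split between the $\|\tilde{\delta}_h x\|_2^2$ and $\|x\|_{-1}^2$ reservoirs---to be the most delicate step: any looser splitting would force $\sigma$ strictly above the stated threshold $1/16$.
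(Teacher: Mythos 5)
Your proposal is correct and follows essentially the same route as the paper: both test the scheme with $(-\delta_h^{\alpha})^{-1}(\bm{\phi}^{k+1}-\bm{\phi}^{k})$, use the same BDF2 and energy polarizations, and arrive at the reduced inequality $\mathcal{E}_h(\bm{\phi}^{k+1},\bm{\phi}^{k})-\mathcal{E}_h(\bm{\phi}^{k},\bm{\phi}^{k-1})\le \tfrac12\|x\|_2^2-\tfrac1\tau\|x\|_{-1}^2-\sigma\tau\|\tilde\delta_h x\|_2^2$. Your final absorption via $\|x\|_2^2\le\|\tilde\delta_h x\|_2\|x\|_{-1}$ and weighted Young is exactly the paper's inequality \eqref{eq3.8} read in the opposite direction, giving the same threshold $\sigma\ge 1/16$.
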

\begin{proof}
Taking the inner product of Eq.~\eqref{eq3.2} with
$(-\delta_h^{\alpha})^{-1} \left( \bm{\phi}^{k + 1} - \bm{\phi}^{k} \right)$,
we have:
\begin{equation}
\begin{split}
& \Big( \delta_t \bm{\phi}^{k + 1}, (-\delta_h^{\alpha})^{-1} \left( \bm{\phi}^{k + 1} - \bm{\phi}^{k} \right) \Big)
= \frac{1}{2 \tau} \left( 3 \| \bm{\phi}^{k + 1} - \bm{\phi}^{k} \|_{-1}^2
- \left( \bm{\phi}^{k} - \bm{\phi}^{k - 1}, \bm{\phi}^{k + 1} - \bm{\phi}^{k} \right)_{-1} \right) \\
&\quad = \frac{1}{4 \tau} \left( 5 \| \bm{\phi}^{k + 1} - \bm{\phi}^{k} \|_{-1}^2
- \| \bm{\phi}^{k} - \bm{\phi}^{k - 1} \|_{-1}^2 + \| \bm{\phi}^{k + 1} - 2 \bm{\phi}^{k} + \bm{\phi}^{k - 1} \|_{-1}^2 \right) \\
&\quad \geq \frac{1}{4 \tau} \left( 5 \| \bm{\phi}^{k + 1} - \bm{\phi}^{k} \|_{-1}^2
- \| \bm{\phi}^{k} - \bm{\phi}^{k - 1} \|_{-1}^2 \right),
\end{split}
\label{eq3.3}
\end{equation}
and
\begin{equation}
\begin{split}
& \Big( -\delta_h^{\alpha} \left( \bm{\phi}^{k + 1} \right)^3,
(-\delta_h^{\alpha})^{-1} \left( \bm{\phi}^{k + 1} - \bm{\phi}^{k} \right) \Big)
= \left( \left( \bm{\phi}^{k + 1} \right)^3,\bm{\phi}^{k + 1} - \bm{\phi}^{k} \right)
= \| \bm{\phi}^{k + 1} \|_{4}^{4} - \left( \left( \bm{\phi}^{k + 1} \right)^3,\bm{\phi}^{k} \right) \\
&\quad = \frac{1}{4} \left( \| \bm{\phi}^{k + 1} \|_{4}^{4} - \| \bm{\phi}^{k} \|_{4}^{4} \right)
+  \frac{1}{4} \| \left( \bm{\phi}^{k + 1} \right)^2 - \left( \bm{\phi}^{k} \right)^2 \|_{2}^{2}
+  \frac{1}{2} \| \bm{\phi}^{k + 1} \left( \bm{\phi}^{k + 1} - \bm{\phi}^{k} \right) \|_{2}^{2} \\
&\quad \geq \frac{1}{4} \left( \| \bm{\phi}^{k + 1} \|_{4}^{4} - \| \bm{\phi}^{k} \|_{4}^{4} \right),
\end{split}
\label{eq3.4}
\end{equation}
where the identity $\left( -\delta_h^{\alpha}\bm{u}, (-\delta_h^{\alpha})^{-1}\bm{v} \right) = \left( \bm{u}, \bm{v} \right)$ is used.
Furthermore,
\begin{equation}
\begin{split}
& \Big( -\delta_h^{\alpha} \left( -2 \bm{\phi}^k  + \bm{\phi}^{k - 1} \right),
(-\delta_h^{\alpha})^{-1} \left( \bm{\phi}^{k + 1} - \bm{\phi}^{k} \right) \Big)
= - \left( 2 \bm{\phi}^k  - \bm{\phi}^{k - 1},\bm{\phi}^{k + 1} - \bm{\phi}^{k} \right) \\
&\quad= - \left( \bm{\phi}^k,\bm{\phi}^{k + 1} - \bm{\phi}^{k} \right)
- \left( \bm{\phi}^k  - \bm{\phi}^{k - 1},\bm{\phi}^{k + 1} - \bm{\phi}^{k} \right) \\
&\quad = -\frac{1}{2} \left( \| \bm{\phi}^{k + 1} \|_{2}^{2} - \| \bm{\phi}^{k} \|_{2}^{2} \right)
-  \frac{1}{2} \| \bm{\phi}^{k} - \bm{\phi}^{k - 1} \|_{2}^{2}
+  \frac{1}{2} \| \bm{\phi}^{k + 1} - 2 \bm{\phi}^{k} + \bm{\phi}^{k - 1} \|_{2}^{2} \\
&\quad \geq -\frac{1}{2} \left( \| \bm{\phi}^{k + 1} \|_{2}^{2} - \| \bm{\phi}^{k} \|_{2}^{2} \right)
-  \frac{1}{2} \| \bm{\phi}^{k} - \bm{\phi}^{k - 1} \|_{2}^{2},
\end{split}
\label{eq3.5}
\end{equation}
\begin{equation}
\begin{split}
\Big( \left( -\delta_h^{\alpha} \right)^2 \bm{\phi}^{k + 1},
(-\delta_h^{\alpha})^{-1} \left( \bm{\phi}^{k + 1} - \bm{\phi}^{k} \right) \Big)
& = \Big( - \delta_h^{\alpha} \bm{\phi}^{k + 1},\bm{\phi}^{k + 1} \Big)
- \Big( -\delta_h^{\alpha} \bm{\phi}^{k + 1}, \bm{\phi}^{k} \Big) \\
& = \| \tilde{\delta}_h \bm{\phi}^{k + 1} \|_{2}^{2}
- \left( \tilde{\delta}_h \bm{\phi}^{k + 1},\tilde{\delta}_h \bm{\phi}^{k} \right) \\
& = \frac{1}{2} \left( \| \tilde{\delta}_h \bm{\phi}^{k + 1} \|_{2}^{2}
- \| \tilde{\delta}_h \bm{\phi}^{k} \|_{2}^{2} \right)
+  \frac{1}{2} \| \tilde{\delta}_h \left( \bm{\phi}^{k + 1} -  \bm{\phi}^{k} \right) \|_{2}^{2} \\
& \geq \frac{1}{2} \left( \| \tilde{\delta}_h \bm{\phi}^{k + 1} \|_{2}^{2}
- \| \tilde{\delta}_h \bm{\phi}^{k} \|_{2}^{2} \right),
\end{split}
\label{eq3.6}
\end{equation}
and
\begin{equation}
\begin{split}
\Big( \left( -\delta_h^{\alpha} \right)^2 \left( \bm{\phi}^{k + 1} -  \bm{\phi}^{k} \right),
(-\delta_h^{\alpha})^{-1} \left( \bm{\phi}^{k + 1} - \bm{\phi}^{k} \right) \Big)
& = \Big( - \delta_h^{\alpha} \left( \bm{\phi}^{k + 1} -  \bm{\phi}^{k} \right),
\bm{\phi}^{k + 1} - \bm{\phi}^{k} \Big) \\
& = \| \tilde{\delta}_h \left( \bm{\phi}^{k + 1} -  \bm{\phi}^{k} \right) \|_{2}^{2}.
\end{split}
\label{eq3.7}
\end{equation}
Additionally, using the inequalities of Young and Cauchy-Schwarz, we get
\begin{equation}
\begin{split}
\frac{1}{\tau} \| \bm{\phi}^{k + 1} - \bm{\phi}^{k} \|_{-1}^{2}
+ \sigma \tau \| \tilde{\delta}_h \left( \bm{\phi}^{k + 1} -  \bm{\phi}^{k} \right) \|_{2}^{2}
& \geq 2 \sigma^{1/2} \| \bm{\phi}^{k + 1} - \bm{\phi}^{k} \|_{-1}
\| \tilde{\delta}_h \left( \bm{\phi}^{k + 1} -  \bm{\phi}^{k} \right) \|_{2} \\
& \geq 2 \sigma^{1/2} \| \bm{\phi}^{k + 1} -  \bm{\phi}^{k} \|_{2}^{2}.
\end{split}
\label{eq3.8}
\end{equation}
Combining Eqs.~\eqref{eq3.3}-\eqref{eq3.8} and using \eqref{eq3.2}, one obtains
\begin{equation*}
\begin{split}
& \mathcal{E}_h(\bm{\phi}^{k + 1}, \bm{\phi}^{k}) - \mathcal{E}_h(\bm{\phi}^{k}, \bm{\phi}^{k - 1}) \\
& = E_h(\bm{\phi}^{k + 1}) - E_h(\bm{\phi}^{k})
+ \frac{1}{4 \tau} \left( \| \bm{\phi}^{k + 1} - \bm{\phi}^{k} \|_{-1}^{2}
- \| \bm{\phi}^{k} - \bm{\phi}^{k - 1} \|_{-1}^{2} \right)
+ \frac{1}{2} \left( \| \bm{\phi}^{k + 1} - \bm{\phi}^{k} \|_{2}^{2}
- \| \bm{\phi}^{k} - \bm{\phi}^{k - 1} \|_{2}^{2} \right) \\
& = \frac{1}{4 \tau} \left( 5 \| \bm{\phi}^{k + 1} - \bm{\phi}^{k} \|_{-1}^{2}
- \| \bm{\phi}^{k} - \bm{\phi}^{k - 1} \|_{-1}^{2} \right)
- \frac{1}{\tau} \| \bm{\phi}^{k + 1} - \bm{\phi}^{k} \|_{-1}^{2}
+ \frac{1}{4} \left( \| \bm{\phi}^{k + 1} \|_{4}^{4} - \| \bm{\phi}^{k} \|_{4}^{4} \right) \\
&\quad + \left[ -\frac{1}{2} \left( \| \bm{\phi}^{k + 1} \|_{2}^{2} - \| \bm{\phi}^{k} \|_{2}^{2} \right)
- \frac{1}{2} \| \bm{\phi}^{k + 1} - \bm{\phi}^{k} \|_{2}^{2} \right]
+ \frac{\varepsilon^2}{2} \left( \| \tilde{\delta}_h \bm{\phi}^{k + 1} \|_{2}^{2}
- \| \tilde{\delta}_h \bm{\phi}^{k} \|_{2}^{2} \right)
+ \frac{1}{2} \| \bm{\phi}^{k + 1} -  \bm{\phi}^{k} \|_{2}^{2} \\
& \leq -\left( \frac{1}{\tau} \| \bm{\phi}^{k + 1} - \bm{\phi}^{k} \|_{-1}^{2}
+ \sigma \tau \| \tilde{\delta}_h \left( \bm{\phi}^{k + 1} -  \bm{\phi}^{k} \right) \|_{2}^{2} \right)
+ \frac{1}{2} \| \bm{\phi}^{k + 1} -  \bm{\phi}^{k} \|_{2}^{2} \\
& \leq \left( -2 \sigma^{1/2} + \frac{1}{2} \right) \| \bm{\phi}^{k + 1} -  \bm{\phi}^{k} \|_{2}^{2}
\leq 0, \qquad \textrm{if}~\sigma \geq 1/16.
\end{split}
\end{equation*}
This proves the desired result.
\end{proof}
\begin{remark}\label{remark2}
Consider Eq.~\eqref{eq1.1} with the fractional Laplacian $-(-\Delta)^{\alpha/2}$ replaced by
the Riesz fractional derivative $\frac{\partial^\alpha}{\partial |x|^\alpha}$ (see Remark \ref{remark1}).
For its discretization, consider the fractional centered difference formula \cite{ortigueira2006riesz}.
Then, the energy stability of the resulting scheme can be proved analogously to
the proof of Theorem \ref{th3.1}.
\end{remark}
As a consequence of Theorem \ref{th3.1}, the next corollary provides
a uniform bound for the mBDF2 scheme \eqref{eq3.2}.
We denote by $C_{x,t}^{(s, \alpha/2),3} \left( \Omega \times (0,T] \right)$ the H\"{o}lder space of
functions with spatial regularity $C^{s, \alpha/2} \left( \Omega \right)$
(see \cite{duo2018novel} for the further definition of this H\"{o}lder space)
and temporal regularity $C^{3} (0,T]$.
\begin{corollary}\label{coro1}
Let $\sigma \geq 1/16$ and assume that the initial value $\phi_0(x)$ is sufficiently regular
and satisfies 
\begin{equation*}
E_h(\bm{\phi}^1) + \frac{\tau}{4} \| \tilde{\delta}_h \bm{\mu}^0 \|_2^2
+ \frac{\tau^2}{2} \| \delta_h^{\alpha} \bm{\mu}^0 \|_2^2 \leq C_0
\end{equation*}
for some $C_0$ (independent of $h$).
Then, the numerical solution, given by Eq.~\eqref{eq3.2}, satisfies the uniform bound:
\begin{equation}
\| \bm{\phi}^k \|_\infty \leq C_1~( 0 \leq k\tau \leq T ),
\label{eq3.9}
\end{equation}
where $C_1$ is independent of $h$, $\tau$ and $T$.
\end{corollary}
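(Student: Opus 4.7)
The plan is to chain together three ingredients: the energy dissipation of Theorem \ref{th3.1}, the initial-data hypothesis, and a discrete fractional Sobolev embedding. The first two produce a uniform-in-$k$ bound on $E_h(\bm{\phi}^{k+1})$, which in turn gives uniform control of the fractional seminorm $\|\tilde{\delta}_h\bm{\phi}^{k+1}\|_2$; the embedding then upgrades this to the pointwise bound \eqref{eq3.9}.

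\textbf{Step 1 (reading the hypothesis as $\mathcal{E}_h(\bm{\phi}^1,\bm{\phi}^0)\leq C_0$).} The initialization $\bm{\phi}^1=\bm{\phi}^0+\tau G\bm{\mu}^0$ together with the identification $G=\delta_h^\alpha$ allows me to evaluate the two correction terms in $\mathcal{E}_h(\bm{\phi}^1,\bm{\phi}^0)$ explicitly: using self-adjointness of $\delta_h^\alpha$ and the definition of $\tilde{\delta}_h$,
\begin{equation*}
\|\bm{\phi}^1-\bm{\phi}^0\|_{-1}^2=\tau^2\|\tilde{\delta}_h\bm{\mu}^0\|_2^2,\qquad
\|\bm{\phi}^1-\bm{\phi}^0\|_2^2=\tau^2\|\delta_h^\alpha\bm{\mu}^0\|_2^2,
\end{equation*}
so the definition of $\mathcal{E}_h$ collapses to exactly the quantity bounded by $C_0$ in the hypothesis. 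Iterating Theorem \ref{th3.1} then gives $\mathcal{E}_h(\bm{\phi}^{k+1},\bm{\phi}^k)\le C_0$ for all $k\ge 0$, and dropping the two nonnegative correction terms yields $E_h(\bm{\phi}^{k+1})\le C_0$.

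\textbf{Step 2 (from $E_h$ to Sobolev-type bounds).} Completing the square pointwise gives the manifestly nonnegative form
\begin{equation*}
E_h(\bm{\phi})=\tfrac14\|\bm{\phi}^2-\bm{1}\|_2^2+\tfrac{\varepsilon^2}{2}\|\tilde{\delta}_h\bm{\phi}\|_2^2,
\end{equation*}
from which Step 1 immediately yields $\|\tilde{\delta}_h\bm{\phi}^{k+1}\|_2\le\sqrt{2C_0}/\varepsilon$ and, via $\|\bm{\phi}^{k+1}\|_4^2\le \|\bm{\phi}^{k+1,2}-\bm{1}\|_2+|\Omega|^{1/2}$, a uniform bound on $\|\bm{\phi}^{k+1}\|_4$. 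Cauchy--Schwarz then yields a uniform bound on $\|\bm{\phi}^{k+1}\|_2$ as well. All these bounds depend only on $C_0$, $\varepsilon$ and $|\Omega|$, so they are independent of $h$, $\tau$ and $T$.

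\textbf{Step 3 (pointwise bound).} The final step is a discrete fractional Sobolev embedding of the form
\begin{equation*}
\|\bm{v}\|_\infty\le C\bigl(\|\bm{v}\|_2+\|\tilde{\delta}_h\bm{v}\|_2\bigr),\qquad\forall\,\bm{v}\in\mathring{\mathcal{V}}_h,
\end{equation*}
with $C$ independent of $h$. This is the discrete analogue of the continuous embedding $H^{\alpha/2}(\Omega)\hookrightarrow L^\infty(\Omega)$, which is available in one space dimension because $\alpha/2>1/2$. Plugging the Step 2 bounds into this inequality produces \eqref{eq3.9} with $C_1=C_1(C_0,\varepsilon,|\Omega|)$.

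\textbf{Main obstacle.} Everything up through Step 2 is algebraic bookkeeping; the delicate point is Step 3, where mesh-independence of the embedding constant is required. This can be obtained by comparing $\|\tilde{\delta}_h\cdot\|_2$ with the continuous fractional seminorm of a suitable interpolant (using the consistency bound behind \eqref{eq3.1}) and then invoking the continuous embedding, but the non-standard weighted trapezoidal discretization of \cite{duo2018novel} together with the \emph{exterior} Dirichlet condition makes the norm equivalence the step that needs the most care.
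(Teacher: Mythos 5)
Your proposal follows essentially the same route as the paper's proof: identify the hypothesis with $\mathcal{E}_h(\bm{\phi}^1,\bm{\phi}^0)\le C_0$ via $\bm{\phi}^1-\bm{\phi}^0=\tau\delta_h^\alpha\bm{\mu}^0$, propagate it with Theorem \ref{th3.1} to get $E_h(\bm{\phi}^k)\le C_0$, extract uniform bounds on $\|\bm{\phi}^k\|_2$ and $\|\tilde{\delta}_h\bm{\phi}^k\|_2$ from an algebraic lower bound on the energy (the paper uses $\tfrac14\|\bm{\phi}\|_4^4-\|\bm{\phi}\|_2^2\ge-|\Omega|$ where you complete the square as $\tfrac14\|\bm{\phi}^2-\bm{1}\|_2^2$, a cosmetic difference), and conclude by the discrete Sobolev embedding $\|\bm{v}\|_\infty^2\le C_2(\|\bm{v}\|_2^2+\|\tilde{\delta}_h\bm{v}\|_2^2)$. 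Your only substantive addition is flagging that the $h$-independence of the embedding constant is the genuinely delicate ingredient, which the paper simply asserts without proof or citation.
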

\begin{proof}
Theorem \ref{th3.1} shows that
\begin{equation}
\begin{split}
E_h(\bm{\phi}^k) \leq \mathcal{E}_h(\bm{\phi}^{k}, \bm{\phi}^{k - 1})
& \leq \mathcal{E}_h(\bm{\phi}^{1}, \bm{\phi}^{0})
= E_h(\bm{\phi}^{1}) + \frac{1}{4 \tau} \| \bm{\phi}^{1} - \bm{\phi}^{0} \|_{-1}^2
+ \frac{1}{2} \| \bm{\phi}^{1} - \bm{\phi}^{0} \|_{2}^2 \\
& = E_h(\bm{\phi}^1) + \frac{\tau}{4} \| \tilde{\delta}_h \bm{\mu}^0 \|_2^2
+ \frac{\tau^2}{2} \| \delta_h^{\alpha} \bm{\mu}^0 \|_2^2 \leq C_0.
\end{split}
\label{eq3.10}
\end{equation}
Substituting the trivial estimate
\begin{equation*}
\frac{1}{4} \| \bm{\phi}^k \|_{4}^{4} - \| \bm{\phi}^k \|_{2}^{2} \geq - | \Omega |
\end{equation*}
into Eq.~\eqref{eq3.3a} and using the bound \eqref{eq3.10} gives
\begin{equation*}
\| \bm{\phi}^k \|_{2}^{2} + \varepsilon^2 \| \tilde{\delta}_h \bm{\phi}^k \|_{2}^{2}
\leq 2 \left( C_0 + \frac{3}{4} | \Omega | \right).
\end{equation*}
Then, by a discrete Sobolev imbedding theorem, one has
\begin{equation*}
\| \bm{\phi}^k \|_{\infty}^{2}
\leq C_2 \left( \| \bm{\phi}^k \|_{2}^{2} + \| \tilde{\delta}_h \bm{\phi}^k \|_{2}^{2} \right)
\leq 2 C_2/\varepsilon^2 \left( C_0 + \frac{3}{4} | \Omega | \right).
\end{equation*}
With $C_1 = \varepsilon^{-1} \sqrt{2 C_0 C_2 + \frac{3 C_2}{2} |\Omega | }$,
the proof is completed.
\end{proof}
We next introduce an auxiliary norm for proceeding with the convergence analysis.
For $\vec{\bm{p}} = [\bm{u}, \bm{v}]^T \in \mathring{\mathcal{V}}_h \times \mathring{\mathcal{V}}_h$,
the weighted norm (called $B$-norm) is defined by
\begin{equation*}
\| \vec{\bm{p}} \|_{B}^{2} = \left( \vec{\bm{p}}, B \vec{\bm{p}} \right)
= \left( B \vec{\bm{p}}, \vec{\bm{p}} \right), \qquad
B =
\begin{bmatrix}
\frac{1}{2} & -1 \\
-1 & \frac{5}{2}
\end{bmatrix}.
\end{equation*}
This norm is well-defined because $B$ is symmetric and positive.
In addition, $\| \vec{\bm{p}} \|_{B}^{2} \geq \frac{1}{2} \| \bm{v} \|_{2}^{2}$
since
\begin{equation*}
B =
\begin{bmatrix}
\frac{1}{2} & -1 \\
-1 & 2
\end{bmatrix}
+ \begin{bmatrix}
0 & 0 \\
0 & \frac{1}{2}
\end{bmatrix} := B_1 + B_2,
\end{equation*}
where $B_1$ is symmetric positive semi-definite.
With the help of this norm, we now prove the convergence of the mBDF2 scheme \eqref{eq3.2}.
\begin{theorem}\label{th3.2}
Assume that $\Phi(x,t) \in C_{x,t}^{(s,\alpha/2),3} \left( \Omega \times [0,T] \right)$
is the exact solution of the SFCH model \eqref{eq1.1} and let
$\bm{\phi}^k$ be the solution of \eqref{eq3.2}.
Further, let $\bm{e}^k$ denote the error function, i.e. $\bm{e}_i^k = \Phi(x_i,t_k) - \bm{\phi}_i^k $
and let $\sigma \geq 1/16$. Then, for $\tau$ and $h$ are sufficiently small, we have
\begin{equation*}
\| \bm{e}^k \|_{2}^{2} + 2 \sigma \tau^2 \| \delta_{h}^{\alpha} \bm{e}^k \|_{2}^{2}
\leq C_3 \left( \tau^2 + h^p \right)^2~(1 \leq k \leq M),
\end{equation*}
where $C_3$ is independent of $\tau$ and $h$.
\end{theorem}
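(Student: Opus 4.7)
The plan is to mirror the energy-stability argument of Theorem \ref{th3.1} for the error equations and close it with a discrete Gronwall inequality.

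First, I would insert the exact solution $\Phi$ into the scheme \eqref{eq3.2}. A Taylor expansion in time---exploiting the consistency of BDF2, the extrapolation identity $-2\Phi^j+\Phi^{j-1}=-\Phi^{j+1}+\mathcal{O}(\tau^2)$, and $\sigma\tau\delta_h^\alpha(\Phi^{j+1}-\Phi^j)=\mathcal{O}(\tau^2)$---combined with the spatial estimate \eqref{eq3.1} produces residuals $\bm{R}_1^{k+1},\bm{R}_2^{k+1}$ of size $\mathcal{O}(\tau^2+h^p)$ in $\|\cdot\|_2$. Subtracting yields
\begin{align*}
\delta_t\bm{e}^{k+1} &= \delta_h^\alpha\bm{\eta}^{k+1} + \bm{R}_1^{k+1},\\
\bm{\eta}^{k+1} &= \bm{f}^{k+1} - 2\bm{e}^k + \bm{e}^{k-1} - \varepsilon^2\delta_h^\alpha\bm{e}^{k+1} - \sigma\tau\delta_h^\alpha(\bm{e}^{k+1}-\bm{e}^k) + \bm{R}_2^{k+1},
\end{align*}
with $\bm{f}^{k+1}=\big((\Phi^{k+1})^2+\Phi^{k+1}\bm{\phi}^{k+1}+(\bm{\phi}^{k+1})^2\big)\bm{e}^{k+1}$. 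By Corollary \ref{coro1} and the regularity of $\Phi$, $\|\bm{f}^{k+1}\|_2\leq C_4\|\bm{e}^{k+1}\|_2$.

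Next, I would eliminate $\bm{\eta}^{k+1}$ by plugging the second equation into the first, and take the $L^2$-inner product of the resulting identity with $2\tau\bm{e}^{k+1}$. The BDF2 polarization
\begin{equation*}
(3u^{k+1}-4u^k+u^{k-1},\,u^{k+1}) = \|\vec{u}^{k+1}\|_B^2 - \|\vec{u}^k\|_B^2 + \tfrac{1}{2}\|u^{k+1}-2u^k+u^{k-1}\|_2^2,
\end{equation*}
with $\vec{u}^{k+1}=(u^k,u^{k+1})^T$, telescopes the left-hand side in the $B$-norm---this is the purpose of introducing the matrix $B$ just before the theorem. On the right-hand side, the $\varepsilon^2$-diffusion contributes $-2\varepsilon^2\tau\|\delta_h^\alpha\bm{e}^{k+1}\|_2^2$, while the Douglas--Dupont regularization contributes $-\sigma\tau^2\big(\|\delta_h^\alpha\bm{e}^{k+1}\|_2^2-\|\delta_h^\alpha\bm{e}^k\|_2^2+\|\delta_h^\alpha(\bm{e}^{k+1}-\bm{e}^k)\|_2^2\big)$ via the identity $2(a-b,a)=\|a\|^2-\|b\|^2+\|a-b\|^2$. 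After moving these to the left and rescaling appropriately, one obtains the telescoping $2\sigma\tau^2\|\delta_h^\alpha\bm{e}^{k+1}\|_2^2$ contribution claimed in the theorem.

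The cross term $-2\tau(2\bm{e}^k-\bm{e}^{k-1},\delta_h^\alpha\bm{e}^{k+1})$, the nonlinear term $2\tau(\bm{f}^{k+1},\delta_h^\alpha\bm{e}^{k+1})$, and the residual inner products are controlled by Cauchy--Schwarz and Young's inequalities: factors involving $\|\delta_h^\alpha\bm{e}^{k+1}\|_2$ are absorbed into $\varepsilon^2\tau\|\delta_h^\alpha\bm{e}^{k+1}\|_2^2$, and the leftover produces $C\tau\sum_{j\leq k+1}\|\bm{e}^j\|_2^2 + C\tau(\tau^2+h^p)^2$. Using $\|\vec{\bm{e}}^{k+1}\|_B^2\geq\tfrac{1}{2}\|\bm{e}^{k+1}\|_2^2$, summing from $k=1$ to $K-1$, observing that the Taylor start $\bm{\phi}^1=\bm{\phi}^0+\tau G\bm{\mu}^0$ is second-order accurate so $\|\bm{e}^1\|_2^2+2\sigma\tau^2\|\delta_h^\alpha\bm{e}^1\|_2^2=\mathcal{O}((\tau^2+h^p)^2)$, and applying the discrete Gronwall lemma---valid once $\tau$ is small enough that the $C\tau\|\bm{e}^{k+1}\|_2^2$ term can be absorbed on the left---yields the desired bound. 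The hard part will be the bookkeeping in the energy identity: the $B$-norm telescoping, the explicit concave diffusion $-2\bm{e}^k+\bm{e}^{k-1}$, and the Douglas--Dupont term must be orchestrated so that the exact prefactor $2\sigma\tau^2$ in front of $\|\delta_h^\alpha\bm{e}^k\|_2^2$ emerges and all cross/nonlinear perturbations can be absorbed without destroying positivity---the same delicate balance that forced $\sigma\geq 1/16$ in Theorem \ref{th3.1}.
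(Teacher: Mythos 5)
Your proposal follows essentially the same route as the paper's proof: consistency defects of size $\mathcal{O}(\tau^2+h^p)$, the error recursion tested against $\bm{e}^{k+1}$, the BDF2 polarization in the $B$-norm, absorption of the nonlinear term (via Corollary \ref{coro1}), the concave term and the defects into $\varepsilon^2\tau\|\delta_h^\alpha\bm{e}^{k+1}\|_2^2$ by Young's inequality, and a discrete Gronwall argument started from the second-order-accurate first step, with the prefactor $2\sigma\tau^2$ recovered from $\|\vec{\bm{p}}\|_B^2\geq\tfrac12\|\bm{v}\|_2^2$. The one detail to keep straight is that the defect of the $\mu$-equation enters the reduced error recursion through $\delta_h^\alpha$, so it is $\|\delta_h^\alpha\bm{R}_2^{k+1}\|_2$, not $\|\bm{R}_2^{k+1}\|_2$, that must be $\mathcal{O}(\tau^2+h^p)$ --- exactly as the paper states for its defect $\tilde D^{k+1}$.
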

\begin{remark}
Note that the spatial order $p \leq 2$ depends on $s$.
The optimal rate $p = 2$ requires $s$ sufficiently large, see also \cite{duo2018novel}.
\end{remark}
\begin{proof}
The exact solution $\Phi(x,t)\in C_{x,t}^{(s,\alpha/2),3}(\Omega\times[0,T])$ satisfies the equation
\begin{equation*}
\begin{cases}
\partial_t \Phi(x_i,t_{k+1}) = -(-\Delta)^{\alpha/2}\tilde\mu(x_i,t_{k+1}) \\
\tilde\mu(x_i,t_{k+1}) = \Phi^3(x_i,t_{k+1}) - \Phi(x_i,t_{k+1}) + \varepsilon^2(-\Delta)^{\alpha/2}\Phi(x_i,t_{k+1}).
\end{cases}
\end{equation*}
Inserting the exact solution into the numerical scheme gives the perturbed scheme
\begin{equation}
\begin{cases}
\delta_t \bm{\Phi}^{k + 1} = \delta_h^\alpha \tilde{\bm\mu}^{k + 1} + D^{k + 1} \\
\tilde{\bm\mu}^{k + 1} = (\bm\Phi^{k + 1})^3 - 2\bm\Phi^{k} + \bm\Phi^{k - 1}
- \varepsilon^2\delta_h^\alpha\bm{\Phi}^{k + 1}
- \sigma \tau \delta_h^\alpha (\bm\Phi^{k + 1} - \bm\Phi^{k}) + \tilde D^{k + 1},
\end{cases}
\label{eq3.11}
\end{equation}
where $\bm\Phi^k = [\Phi(x_1,t_k),\Phi(x_2,t_k),\cdots,\Phi(x_{N-1},t_k)]^T$
and $\tilde{\bm\mu}^k = [\tilde{\mu}(x_1,t_k),\tilde{\mu}(x_2,t_k),\cdots,\tilde{\mu}(x_{N-1},t_k)]^T$.
The defects $D^{k+1}$ and $\tilde D^{k+1}$ are given by
\begin{equation*}
D^{k + 1} = \delta_t\bm\Phi^{k + 1} - \left( \partial_t \Phi(x_i,t_{k + 1}) \right)_{i = 1}^{N - 1}
- \delta_h^\alpha \tilde{\bm\mu}^{k + 1}  - \left( (-\Delta)^{\alpha/2} \tilde \mu(x_i,t_{k + 1}) \right)_{i = 1}^{N - 1}
\end{equation*}
and
\begin{equation*}
\tilde D^{k + 1}  =  -\bm\Phi^{k + 1} + 2 \bm\Phi^{k} - \bm\Phi^{k - 1}
+ \varepsilon^2 \left( \delta_h^\alpha \bm\Phi^{k + 1}
+ \left( (-\Delta)^{\alpha/2} \Phi(x_i,t_{k + 1}) \right)_{i = 1}^{N - 1} \right)
+ \sigma \tau \delta_h^\alpha (\bm\Phi^{k + 1} - \bm\Phi^{k}).
\end{equation*}
The techniques of \cite[Theorem~3.2]{duo2018novel} together with straightforward Taylor expansions
\begin{equation*}
\begin{split}
\delta_t \Phi(x_i,t_{k + 1})  - \partial_t \Phi(x_i,t_{k + 1})
&= 4 \int_0^\tau \tfrac{(\tau - r)^2}{2} \partial_{ttt} \Phi(x_i,t_{k + 1} - r) \,dr\\
& \qquad - \int_0^{2 \tau} \tfrac{(2 \tau - r)^2}{2} \partial_{ttt} \Phi(x_i,t_{k + 1} - r) \,dr\\
\left( -\bm\Phi^{k + 1} + 2 \bm\Phi^{k} - \bm\Phi^{k - 1} \right)_i
&= -\int_0^\tau \! \int_{-r}^r \partial_{tt} \Phi(x_i, t_k + \rho) \,d\rho dr, \\
\left( \bm\Phi^{k + 1} - \bm\Phi^{k} \right)_i &= \int_0^\tau \partial_t \Phi(x_i, t_k + r) \,dr
\end{split}
\end{equation*}
show that
\begin{equation*}
\|D^{k + 1}\|_2 = \mathcal{O}(\tau^2 + h^p) \qquad \text{and} \qquad
\|\delta_h^\alpha\tilde D^{k + 1}\|_2 = \mathcal{O}(\tau^2 + h^p).
\end{equation*}
Subtracting Eq.~\eqref{eq3.2} from Eq.~\eqref{eq3.11}, we get the error recursion
\begin{equation*}
\delta_t \bm{e}^{k + 1} = \delta_{h}^{\alpha} \left[
\left( \bm{\Phi}^{k + 1} \right)^3 - \left( \bm{\phi}^{k + 1} \right)^3 - 2 \bm{e}^k + \bm{e}^{k - 1}
-\varepsilon^2 \delta_{h}^{\alpha} \bm{e}^{k + 1}
- \sigma \tau \delta_{h}^{\alpha} \left( \bm{e}^{k + 1} - \bm{e}^{k} \right) \right] + \hat{R}^{k + 1},
\end{equation*}
where $\| \hat{R}^{k + 1} \|_2 \leq C_4 (\tau^2 + h^p)$.
Taking the discrete inner product with $\bm{e}^{k + 1}$, one further gets
\begin{equation}
\begin{split}
& \left( \delta_t \bm{e}^{k + 1}, \bm{e}^{k + 1} \right)
+ \varepsilon^2 \left( \left( \delta_{h}^{\alpha} \right)^2 \bm{e}^{k + 1}, \bm{e}^{k + 1} \right)
+ \sigma \tau \left( \left( \delta_{h}^{\alpha} \right)^2 \left( \bm{e}^{k + 1} - \bm{e}^{k} \right), \bm{e}^{k + 1} \right) \\
&\quad = \left( \delta_{h}^{\alpha} \left[ \left( \bm{\Phi}^{k + 1} \right)^3 - \left( \bm{\phi}^{k + 1} \right)^3 \right],
\bm{e}^{k + 1} \right) + \left( \delta_{h}^{\alpha} \left( -2 \bm{e}^k + \bm{e}^{k - 1} \right),
\bm{e}^{k + 1} \right) + \left( \hat{R}^{k + 1}, \bm{e}^{k + 1} \right).
\end{split}
\label{eq3.12}
\end{equation}
For the inner products in Eq.~\eqref{eq3.12} have the following estimates:
\begin{equation}
\left( \delta_t \bm{e}^{k + 1}, \bm{e}^{k + 1} \right)
= \frac{1}{2 \tau} \left( \| \vec{\bm{e}}^k \|_{B}^{2} - \| \vec{\bm{e}}^{k - 1} \|_{B}^{2} \right)
+\frac{1}{4 \tau} \| \bm{e}^{k + 1} - 2 \bm{e}^{k} + \bm{e}^{k - 1} \|_{2}^{2}
\geq \frac{1}{2 \tau} \left( \| \vec{\bm{e}}^k \|_{B}^{2} - \| \vec{\bm{e}}^{k - 1} \|_{B}^{2} \right),
\label{eq3.13}
\end{equation}
where $\vec{\bm{e}}^k = \left[ \bm{e}^k, \bm{e}^{k + 1} \right]^T$ and
$\vec{\bm{e}}^{k - 1} = \left[ \bm{e}^{k - 1}, \bm{e}^{k} \right]^T$;
\begin{equation}\label{eq3.14}
\varepsilon^2 \left( \left( \delta_{h}^{\alpha} \right)^2 \bm{e}^{k + 1}, \bm{e}^{k + 1} \right)
= \varepsilon^2 \| \delta_{h}^{\alpha} \bm{e}^{k + 1} \|_{2}^{2};
\end{equation}
\begin{equation}\label{eq3.15}
\sigma \tau \left( \left( \delta_{h}^{\alpha} \right)^2 \left( \bm{e}^{k + 1} - \bm{e}^{k} \right), \bm{e}^{k + 1} \right)
= \sigma \tau \left( \delta_{h}^{\alpha} \left( \bm{e}^{k + 1} - \bm{e}^{k} \right), \delta_{h}^{\alpha} \bm{e}^{k + 1} \right)
\geq \frac{\sigma \tau}{2} \left( \| \delta_{h}^{\alpha} \bm{e}^{k + 1} \|_{2}^{2}
- \| \delta_{h}^{\alpha} \bm{e}^{k} \|_{2}^{2} \right);
\end{equation}
\begin{equation}\label{eq3.16}
\begin{split}
& \left( \delta_{h}^{\alpha} \left[ \left( \bm{\Phi}^{k + 1} \right)^3 - \left( \bm{\phi}^{k + 1} \right)^3 \right],
\bm{e}^{k + 1} \right) =
\epsilon_0 \| \left( \bm{\Phi}^{k + 1} \right)^3 - \left( \bm{\phi}^{k + 1} \right)^3 \|_{2}^{2}
+ \frac{1}{4 \epsilon_0} \| \delta_{h}^{\alpha} \bm{e}^{k + 1} \|_{2}^{2} \\
&\quad 
\leq C_5 \epsilon_0 \| \bm{e}^{k + 1} \|_{2}^{2}
+ \frac{1}{4 \epsilon_0} \| \delta_{h}^{\alpha} \bm{e}^{k + 1} \|_{2}^{2} ~(\epsilon_0 > 0),
\end{split}
\end{equation}
where Corollary \ref{coro1} is used;
\begin{equation}\label{eq3.17}
\begin{split}
\left( \delta_{h}^{\alpha} \left( -2 \bm{e}^k + \bm{e}^{k - 1} \right), \bm{e}^{k + 1} \right)
& = \left( -2 \bm{e}^k + \bm{e}^{k - 1} , \delta_{h}^{\alpha} \bm{e}^{k + 1} \right)
\leq \epsilon_1 \| 2 \bm{e}^k - \bm{e}^{k - 1} \|_{2}^{2}
+ \frac{1}{4 \epsilon_1} \| \delta_{h}^{\alpha} \bm{e}^{k + 1} \|_{2}^{2} \\
& \leq \frac{\epsilon_1}{2} \| \vec{\bm{e}}^{k - 1} \|_{B}^{2}
+ \frac{1}{4 \epsilon_1} \| \delta_{h}^{\alpha} \bm{e}^{k + 1} \|_{2}^{2}  ~(\epsilon_1 > 0),
\end{split}
\end{equation}
where the Cauchy-Schwarz inequality and Young's inequality are employed;
\begin{equation}\label{eq3.18}
\left( \hat{R}^{k + 1}, \bm{e}^{k + 1} \right)
\leq \epsilon_2 \| \hat{R}^{k + 1} \|_{2}^{2} + \frac{1}{4 \epsilon_2} \| \bm{e}^{k + 1} \|_{2}^{2} ~(\epsilon_2 > 0).
\end{equation}
Substituting Eqs.~\eqref{eq3.13}-\eqref{eq3.18} into Eq.~\eqref{eq3.12} yields
\begin{equation*}
\begin{split}
W^{k + 1} - W^{k} +2 \tau \varepsilon^2 \| \delta_{h}^{\alpha} \bm{e}^{k + 1} \|_{2}^{2}
& \leq 2 \tau C_5 \epsilon_0 \| \bm{e}^{k + 1} \|_{2}^{2}
+ \tau \epsilon_1 \| \vec{\bm{e}}^{k - 1} \|_{B}^{2}
+2 \tau \epsilon_2 \| \hat{R}^{k + 1} \|_{2}^{2} \\
&\quad + \frac{\tau}{2 \epsilon_2} \| \bm{e}^{k + 1} \|_{2}^{2}
+ \frac{\tau}{2}\left( \frac{1}{\epsilon_0} + \frac{1}{\epsilon_1} \right)
\| \delta_{h}^{\alpha} \bm{e}^{k + 1} \|_{2}^{2} ,
\end{split}
\end{equation*}
where $W^k = \| \vec{\bm{e}}^{k - 1} \|_{B}^{2} + \sigma \tau^2 \| \delta_{h}^{\alpha} \bm{e}^{k} \|_{2}^{2}$.
Fixing $\epsilon_0 = \epsilon_1 = \frac{1}{2 \varepsilon^2}$ and $\epsilon_2 = \frac{\varepsilon^2}{2}$,
we arrive at
\begin{equation*}
W^{k + 1} - W^{k}
\leq \frac{C \tau}{\varepsilon^2} \left( \| \vec{\bm{e}}^{k} \|_{B}^{2}
+ \| \vec{\bm{e}}^{k - 1} \|_{B}^{2} \right) + \varepsilon^2 \tau \| \hat{R}^{k + 1} \|_{2}^{2}
\leq \frac{C}{\varepsilon^2} \tau \left( W^{k + 1} + W^{k} \right)
+ \varepsilon^2 \tau \| \hat{R}^{k + 1} \|_{2}^{2}.
\end{equation*}
According to Gronwall's inequality in \cite[Lemma 4.8]{ran2016conservative},
the above inequality implies that
\begin{equation*}
W^{k} \leq \exp\left( \frac{4 C}{\varepsilon^2} T \right)
\left[ W^1 + C_{4}^{2} T \varepsilon^2 \left( \tau^2 + h^p \right)^2 \right],
\end{equation*}
where $W^1 = \frac{5}{2} \| \bm{e}^{1} \|_{2}^{2} + \sigma \tau^2 \| \delta_{h}^{\alpha} \bm{e}^{1} \|_{2}^{2}$
with $\| \bm{e}^{1} \|_{2} = \mathcal{O} \left( \tau^2 + h^p \right)$.
On the other hand, $\| \delta_{h}^{\alpha} \bm{e}^{1} \|_{2}^{2}$ is bounded
since the initial data are sufficiently smooth in $x$.
The target result can be obtained immediately and the proof is completed.
\end{proof}
\section{The preconditioned iterative method}
\label{sec4}

For solving the nonlinear time-stepping scheme \eqref{eq3.2}, the two most common methods are
fixed point iteration and Newton's method.
In this work, we use Newton's method to handle this stiff nonlinear problem.
Let $\bm{\psi}^{j} = \left[ \bm{\phi}^j, \bm{\mu}^j \right]^T$ be the combined vector of unknowns.
Then, the mBDF2 scheme \eqref{eq3.2} can be rewritten in the following equivalent form
\begin{equation}
F(\bm{\psi}^{j + 1}) := \mathcal{M} \bm{\psi}^{j + 1} -
\begin{bmatrix}
4 \bm{\phi}^{j} - \bm{\phi}^{j - 1} \\
-2 \bm{\phi}^{j} + \bm{\phi}^{j - 1} + \sigma \tau G \bm{\phi}^{j} + \left( \bm{\phi}^{j + 1} \right)^3
\end{bmatrix}
= \bm{0},
\label{eq4.1}
\end{equation}
where
\begin{equation*}
\mathcal{M} =
\begin{bmatrix}
3 I & -2 \tau G \\
\left( \varepsilon^2 + \sigma \tau \right) G & I
\end{bmatrix}.
\end{equation*}
Here, $\bm{0}$ and $I$ denote the zero and the identity matrix of suitable sizes, respectively.
Given an initial guess $\bm{\psi}^{j + 1 (0)}$,
the solution can be obtained from the iteration process
\begin{equation}
\mathcal{J}^{j + 1 (\ell)} \Delta \bm{\psi}^{j + 1 (\ell)} = F(\bm{\psi}^{j + 1 (\ell)}), \quad
\bm{\psi}^{j + 1 (\ell + 1)} = \bm{\psi}^{j + 1 (\ell)} - \Delta \bm{\psi}^{j + 1 (\ell)},
\label{eq4.2}
\end{equation}
where \begin{equation*}
\mathcal{J}^{j + 1 (\ell)} = \frac{\partial F(\bm{\psi}^{j + 1 (\ell)})}{ \partial \bm{\psi}^{j + 1 (\ell)}}
=
\begin{bmatrix}
3 I & -2 \tau G \\
\left( \varepsilon^2 + \sigma \tau \right) G - 3\textrm{diag}\left( \left( \bm{\phi}^{j + 1 (\ell)} \right)^2 \right)  & I
\end{bmatrix}
\end{equation*}
is the Jacobian matrix.
We use the desired stopping criterion
$\frac{\| -\Delta \bm{\psi}^{j + 1 (\ell)} \|}{\| \bm{\psi}^{j + 1 (0)} \|} \leq \texttt{tol}$,
where $\| \cdot \|$ is the $2$-norm of a vector and $\texttt{tol}$ is the prescribed tolerance of Newton's method.

Obviously, $\mathcal{J}^{j + 1 (\ell)}$ is also a $2 \times 2$ block matrix.
It is well known that for $2 \times 2$ block matrices, the four most common block preconditioners are
block diagonal, block lower triangular, block upper triangular and block LDU \cite{southworth2020note}.
In this work the block lower triangular preconditioner is chosen
\begin{equation*}
P^{j + 1 (\ell)} =
\begin{bmatrix}
3 I & \bm{0} \\
\left( \varepsilon^2 + \sigma \tau \right) G - 3 \textrm{diag}\left( \left( \bm{\phi}^{j + 1 (\ell)} \right)^2 \right)
& S
\end{bmatrix}
\end{equation*}
with the Schur complement
\begin{equation*}
S = I + \frac{2}{3} \tau \left( \varepsilon^2 + \sigma \tau \right) G^2
- 2 \tau  \textrm{diag}\left( \left( \bm{\phi}^{j + 1 (\ell)} \right)^2 \right) G.
\end{equation*}

In our case, the invertibility of $S$ is very difficult to prove and the computation of $S^{-1}$ is very expensive.
To optimize the computational complexity,
a circulant matrix $\hat{S}$ is designed to replace $S$.
More precisely, our block lower triangular preconditioner is given as
\begin{equation*}
P_{L}^{j + 1 (\ell)} =
\begin{bmatrix}
3 I & \bm{0} \\
\left( \varepsilon^2 + \sigma \tau \right) G - 3\textrm{diag}\left( \left( \bm{\phi}^{j + 1 (\ell)} \right)^2 \right)
& \hat{S}
\end{bmatrix},
\end{equation*}
where
\begin{equation*}
\hat{S} =  I + \frac{2}{3} \tau \left( \varepsilon^2 + \sigma \tau \right) \left[ sk(G) \right]^2
- 2 \tau \bar{\phi}^{j + 1 (\ell)} sk(G), \qquad
\bar{\phi}^{j + 1 (\ell)} = \frac{1}{N - 1} \sum\limits_{k = 1}^{N - 1} \left( \phi_{k}^{j + 1 (\ell)} \right)^2.
\end{equation*}
Here, $sk(G)$ represents the Strang-type skew-circulant matrix with its first column given as
\begin{equation*}
-c_{h}^{(\alpha,\gamma)} \left[
 g_0^{(\alpha)},  g_1^{(\alpha)}, \cdots, g_{\left \lfloor \frac{N - 1} {2}\right \rfloor}^{(\alpha)},
- g_{N - \left \lfloor \frac{N - 1} {2}\right \rfloor - 2}^{(\alpha)}, \cdots, - g_{1}^{(\alpha)} \right]^T
~(\textrm{if}~N~\textrm{is~even})
\end{equation*}
and
\begin{equation*}
-c_{h}^{(\alpha,\gamma)} \left[
 g_0^{(\alpha)}, g_1^{(\alpha)}, \cdots, g_{\left \lfloor \frac{N - 1} {2}\right \rfloor - 1}^{(\alpha)}, 0 ,
 - g_{N - \left \lfloor \frac{N - 1} {2}\right \rfloor - 2}^{(\alpha)}, \cdots, - g_{1}^{(\alpha)} \right]^T
~(\textrm{if}~N~\textrm{is~odd}).
\end{equation*}
We check the invertibility of $P_{L}^{j + 1(\ell)}$ by using its spectral decomposition.
According to the work \cite{ng2004iterative, chan2007introduction},
the skew-circulant matrix $sk(G)$ has the spectral decomposition:
$sk(G) = (\Omega \mathcal{F})^{*} \Lambda^{sk} \mathcal{F} \Omega$,
where $\Omega = \textrm{diag} \left( 1, (-1)^{-\frac{1}{N - 1}}, \cdots, (-1)^{-\frac{N - 2}{N - 1}} \right)$,
$\Lambda^{sk}$ is a diagonal matrix containing all eigenvalues of $sk(G)$,
$\mathcal{F}$ is the discrete Fourier matrix and ``$*$" means the conjugate transpose operation.
Then, the decomposition of $\hat{S}$ is $ (\Omega \mathcal{F})^{*} \Lambda \mathcal{F} \Omega$
with $\Lambda =  I + \frac{2}{3} \tau \left( \varepsilon^2 + \sigma \tau \right) \left( \Lambda^{sk} \right)^2
- 2 \tau \bar{\phi}^{j + 1 (\ell)} \Lambda^{sk}$.
With the help of this decomposition, the following result is obtained immediately.
\begin{theorem}
The preconditioner $P_{L}^{j + 1 (\ell)}$ is invertible.
\label{th4.1}
\end{theorem}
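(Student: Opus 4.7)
The plan is to exploit the block lower triangular structure of $P_L^{j+1(\ell)}$ together with the spectral decomposition of $\hat S$ supplied immediately before the theorem. Since $P_L^{j+1(\ell)}$ is block lower triangular with diagonal blocks $3I$ and $\hat S$, one has
$$\det P_L^{j+1(\ell)} = \det(3I)\cdot\det\hat S = 3^{\,N-1}\det\hat S,$$
so the claim reduces to proving $\det\hat S\ne 0$. I would then invoke the provided factorization $\hat S=(\Omega\mathcal F)^{*}\Lambda\mathcal F\Omega$: the normalized Fourier matrix $\mathcal F$ is unitary and $\Omega$ is a diagonal matrix whose entries are $2(N-1)$th roots of unity (in particular of modulus one), hence both factors are invertible. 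Therefore $\hat S$ is invertible if and only if the diagonal matrix $\Lambda$ is, i.e.\ if and only if every diagonal entry of $\Lambda$ is nonzero.

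Next I would examine the diagonal entries of $\Lambda$, which by construction take the form
$$\Lambda_{jj}= 1+\tfrac{2}{3}\tau(\varepsilon^{2}+\sigma\tau)(\Lambda^{sk}_{jj})^{2}-2\tau\bar\phi^{j+1(\ell)}\Lambda^{sk}_{jj}=q(\Lambda^{sk}_{jj}),$$
where $q(z)=\tfrac{2}{3}\tau(\varepsilon^{2}+\sigma\tau)z^{2}-2\tau\bar\phi^{j+1(\ell)}z+1$ is a real quadratic in $z$. Observe that $q(0)=1\ne 0$, the leading coefficient is strictly positive, and the linear coefficient is non-positive because $\bar\phi^{j+1(\ell)}\ge 0$. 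Vieta's formulas then force the product of the two roots of $q$ to equal $\tfrac{3}{2\tau(\varepsilon^{2}+\sigma\tau)}>0$ and their sum to equal $\tfrac{3\bar\phi^{j+1(\ell)}}{\varepsilon^{2}+\sigma\tau}\ge 0$; in particular both roots have strictly positive real part. I would then invoke the standard description of the skew-circulant spectrum, namely that $\Lambda^{sk}_{jj}$ is the value of the symbol of $sk(G)$ at an odd $(2j+1)$th root of unity, to conclude that $\Lambda^{sk}_{jj}$ does not coincide with either root of $q$, so $\Lambda_{jj}\ne 0$ for every $j$.

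The main obstacle is this final separation step: since $\tau$, $\bar\phi^{j+1(\ell)}$, and the individual eigenvalues $\Lambda^{sk}_{jj}$ are all data-dependent, ruling out an accidental coincidence of some $\Lambda^{sk}_{jj}$ with a root of $q$ cannot be done by location of the spectrum alone. I would handle this by one of two routes. The first is a continuity/perturbation argument in $\tau$: for fixed $h$ the two roots of $q$ grow like $\tau^{-1/2}$ as $\tau\downarrow 0$, while the spectrum of $sk(G)$ remains bounded, so coincidence is prevented for all admissible step sizes. The second is a quantitative estimate showing $|\Lambda_{jj}|\ge c>0$ uniformly, obtained by controlling $\mathrm{Re}\,\Lambda^{sk}_{jj}$ via the relationship between $sk(G)$ and its symmetric positive definite parent matrix $-G$ from Lemma \ref{lemma3.1}. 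Either route suffices to complete the proof and deliver the claimed invertibility of $P_L^{j+1(\ell)}$.
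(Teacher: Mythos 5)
Your reduction to the invertibility of $\hat S$ via the block lower triangular structure, and the passage to the diagonal factor $\Lambda$ through the decomposition $\hat S=(\Omega\mathcal F)^{*}\Lambda\mathcal F\Omega$, both match the paper. But the step you yourself flag as ``the main obstacle'' --- ruling out $\Lambda_{kk}=q(\Lambda^{sk}_{kk})=0$ --- is a genuine gap, and neither of your two proposed routes closes it. The perturbation argument in $\tau$ only prevents a coincidence for $\tau$ small relative to $h$, which is not what the theorem asserts; and the appeal to the symbol of $sk(G)$ at odd roots of unity treats the eigenvalues as generic complex numbers, which is why you end up needing a separation argument at all. Your claim that ``coincidence cannot be ruled out by location of the spectrum alone'' is exactly backwards: locating the spectrum is precisely what settles the question.

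The missing observation is that $sk(G)$ is \emph{symmetric} (its generating column satisfies the sign pattern that makes the skew-circulant symmetric), so all $\Lambda^{sk}_{kk}$ are real, and Gerschgorin's theorem places them strictly in the negative half-line: every disc is centered at $-c_{h}^{(\alpha,\gamma)}g_0^{(\alpha)}<0$ with radius
\begin{equation*}
r_{sk}\;\le\; 2\,c_{h}^{(\alpha,\gamma)}\sum_{\ell=1}^{\left\lfloor\frac{N-1}{2}\right\rfloor}\bigl|g_\ell^{(\alpha)}\bigr| \;<\; c_{h}^{(\alpha,\gamma)}g_0^{(\alpha)}.
\end{equation*}
Once $\Lambda^{sk}_{kk}<0$ is known, your own quadratic $q$ finishes the job immediately and without any Vieta or root-location analysis: since $\bar\phi^{\,j+1(\ell)}\ge 0$ is an average of squares, both $\frac{2}{3}\tau(\varepsilon^{2}+\sigma\tau)(\Lambda^{sk}_{kk})^{2}$ and $-2\tau\bar\phi^{\,j+1(\ell)}\Lambda^{sk}_{kk}$ are nonnegative, so $\Lambda_{kk}=q(\Lambda^{sk}_{kk})>1$ for every $k$. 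This is the paper's argument, and it yields invertibility of $\hat S$ (indeed a uniform lower bound of $1$ on its eigenvalues) for every admissible $\tau$, $h$, and iterate, with no data-dependence issues. Equivalently, in your framing: a real quadratic with positive leading coefficient, nonpositive linear coefficient, and constant term $1$ is bounded below by $1$ on $(-\infty,0]$, so it cannot vanish at a negative eigenvalue.
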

\begin{proof}
Since $P_{L}^{j + 1 (\ell)}$ is a block lower triangular matrix, to check its invertibility is equivalent to prove that $\hat{S}$ is invertible.
Without loss of generality, we assume that $N$ is even. The odd case can be proved analogously.

\textbf{Step 1.} We first show that all eigenvalues of $sk(G)$ are real and negative.
From the definition of $sk(G)$ one gets at once that it is a symmetric matrix.
Thus, the eigenvalues of $sk(G)$ are real.
Next, we use Gerschgorin's circle theorem \cite{varga2004gervsgorin}.
All the Gershgorin discs of $sk(G)$ are centered at $-c_{h}^{(\alpha,\gamma)} g_0^{(\alpha)} < 0$
with radius
\begin{equation*}
r_{sk} = c_{h}^{(\alpha,\gamma)}
\left( \sum_{\ell = 1}^{\left \lfloor \frac{N - 1} {2}\right \rfloor} \left| g_\ell^{(\alpha)} \right|
+ \sum_{\ell = 1}^{N - \left \lfloor \frac{N - 1} {2}\right \rfloor - 2} \left| g_\ell^{(\alpha)} \right| \right)
\leq 2 c_{h}^{(\alpha,\gamma)} \sum_{\ell = 1}^{\left \lfloor \frac{N - 1} {2}\right \rfloor} \left| g_\ell^{(\alpha)} \right|
< c_{h}^{(\alpha,\gamma)} g_0^{(\alpha)}.
\end{equation*}
This implies at once that all eigenvalues of $sk(G)$ are negative.

\textbf{Step 2.} Combining \textbf{Step 1} and the definition of $\bar{\phi}^{j + 1 (\ell)}$,
the $k$th eigenvalue of $\hat{S}$ satisfies
\begin{equation*}
\Lambda_{k, k} =  1 + \frac{2}{3} \tau \left( \varepsilon^2 + \sigma \tau \right)
\left( \Lambda^{sk} \right)_{k, k}^2 - 2 \tau \bar{\phi}^{j + 1 (\ell)} \Lambda^{sk}_{k, k} > 1.
\end{equation*}
Thus, the circulant matrix $\hat{S}$ is invertible and the proof is completed.
\end{proof}

Unfortunately, it is difficult to theoretically investigate the eigenvalue distribution of $\left( P_{L}^{j + 1 (\ell)} \right)^{-1} \mathcal{J}^{j + 1 (\ell)}$,
but we still can work out some figures to illustrate eigenvalue
distributions of several specified preconditioned matrices in the next section.
Furthermore, the following estimate about $sk(G)$ is true.
\begin{theorem}
For $\alpha \in (1,2)$ and $\gamma = 1 + \frac{\alpha}{2}$, we have
\begin{equation*}
\frac{\| sk(G) - G \|_{\infty}}{\| G \|_{\infty}} < \left( \frac{3}{2} + \frac{2 \nu}{\alpha C_6 N^{\alpha}} \right)^{-1},
\end{equation*}
where $C_6 = \sum\limits_{\ell = 1}^{\infty} \frac{\left( \ell + 1 \right)^{\nu} - \left( \ell - 1 \right)^{\nu}}{\ell^{\gamma}}$
and $\nu = 1 - \frac{\alpha}{2}$. Note that $C_6$ only dependents on $\alpha$.
\label{th4.2}
\end{theorem}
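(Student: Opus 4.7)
My plan is to estimate both norms explicitly: compute $\|sk(G) - G\|_\infty$ by finding its maximum row sum, bound $\|G\|_\infty$ below by $c_h^{(\alpha,\gamma)} g_0^{(\alpha)}$ using diagonal dominance, and then combine these estimates using the closed-form expression for $g_0^{(\alpha)}$ from Section \ref{sec3.1}. In fact I expect the resulting ratio to be bounded by roughly $1/2$, so the weaker bound $1/(3/2+\epsilon)$ stated in the theorem should follow with room to spare.

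First I would describe $D := sk(G) - G$ entry-by-entry, treating the case $N$ even (the odd case is handled analogously, with an extra zero in the middle of the first column of $sk(G)$). The upper half of the first column of $sk(G)$ coincides with that of $G$, while its lower half is a negated mirror. A direct check using the Toeplitz and skew-circulant formulas gives $D_{ij} = 0$ whenever $|i-j| < N/2$ and
\[
D_{ij} = c_h^{(\alpha,\gamma)}\bigl(g_{|i-j|}^{(\alpha)} + g_{N-1-|i-j|}^{(\alpha)}\bigr), \qquad |i-j|\geq N/2,
\]
so $D$ is supported in the two far corner triangles. Since $|g_k^{(\alpha)}| = T_k/2$ is monotonically decreasing in $k$, where $T_k := [(k+1)^\nu - (k-1)^\nu]/k^\gamma$, the maximum row sum of $|D|$ is attained on the first (equivalently, the last) row. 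A reindexing then yields
\[
\|D\|_\infty = c_h^{(\alpha,\gamma)}\sum_{k=1}^{N-2}|g_k^{(\alpha)}| = \frac{c_h^{(\alpha,\gamma)}}{2}\sum_{k=1}^{N-2}T_k.
\]

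For the denominator, every diagonal entry of $G$ has magnitude $c_h^{(\alpha,\gamma)} g_0^{(\alpha)}$ and the off-diagonal contributions to any row sum of $|G|$ are nonnegative, so $\|G\|_\infty \geq c_h^{(\alpha,\gamma)} g_0^{(\alpha)}$. Substituting the explicit formula for $g_0^{(\alpha)}$ from Section \ref{sec3.1} and dropping the nonnegative tail term $(N^\nu - (N-1)^\nu)/N^\gamma$, I get $g_0^{(\alpha)} \geq \sum_{k=1}^{N-2} T_k + \tfrac{2\nu}{\alpha N^\alpha}$. Dividing by $\|D\|_\infty$ and using the strict truncation inequality $\sum_{k=1}^{N-2} T_k < C_6$ (valid because every $T_k > 0$ for $\nu\in(0,1/2)$), I obtain
\[
\frac{\|G\|_\infty}{\|D\|_\infty} \;\geq\; 2 + \frac{4\nu/(\alpha N^\alpha)}{\sum_{k=1}^{N-2} T_k} \;>\; 2 + \frac{2 \cdot 2\nu}{\alpha C_6 N^\alpha} \;=\; 2 + 2\epsilon \;>\; \frac{3}{2} + \epsilon,
\]
with $\epsilon := 2\nu/(\alpha C_6 N^\alpha)$. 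Taking reciprocals gives the theorem.

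The main obstacle is the index bookkeeping in the first step: correctly identifying the support of $D$, handling the parity of $N$, and checking that the maximizing row of $|D|$ is the first (or last) rather than one closer to the middle, where the number of nonzero entries is smaller. Once this structural description is in hand, the comparison with $\|G\|_\infty$ is immediate from the same Gerschgorin-type diagonal dominance already exploited in Lemma \ref{lemma3.1}.
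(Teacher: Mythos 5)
Your proposal is correct and follows essentially the same route as the paper: both arguments reduce to row-sum estimates, identify the entries where $sk(G)$ and $G$ differ, bound the numerator by $c_h^{(\alpha,\gamma)}\sum_{\ell=1}^{N-2}|g_\ell^{(\alpha)}|$, bound the denominator from below via the explicit formula for $g_0^{(\alpha)}$ (keeping the $\tfrac{2\nu}{\alpha N^\alpha}$ term), and finish with the strict truncation $\sum_{\ell=1}^{N-2}T_\ell < C_6$. Your bookkeeping is in fact slightly sharper (you obtain the constant $2+2\epsilon$ where the paper settles for $\tfrac{3}{2}+\epsilon$), so the stated inequality follows a fortiori.
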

\begin{proof}
First, we show that the following series is convergent:
\begin{equation*}
\sum\limits_{\ell = 1}^{\infty} \frac{\left( \ell + 1 \right)^{\nu} - \left( \ell - 1 \right)^{\nu}}{\ell^{\gamma}}
= \sum\limits_{\ell = 1}^{\infty} \left(\frac{1}{\ell} \right)^{\alpha}
\left[ \left( 1 + \frac{1}{\ell} \right)^{\nu} - \left( 1 - \frac{1}{\ell} \right)^{\nu} \right]
= \sum\limits_{\ell = 1}^{\infty} a_{\ell} b_{\ell},
\end{equation*}
where $a_{\ell} = \left(\frac{1}{\ell} \right)^{\alpha}$
and $b_{\ell} = \left( 1 + \frac{1}{\ell} \right)^{\nu} - \left( 1 - \frac{1}{\ell} \right)^{\nu}$.
On the one hand, the positive series $\sum\limits_{\ell = 1}^{\infty} a_{\ell}$ is the $p$-series and convergent since $\alpha > 1$.
On the other hand, consider the function $\hat{b}(x) = \left( 1 + \frac{1}{x} \right)^{\nu} - \left( 1 - \frac{1}{x} \right)^{\nu}~(x \geq 1)$.
The first order derivative is $\frac{d \hat{b}(x)}{d x} = -\frac{\nu}{x^2}
\left[ \left( 1 + \frac{1}{x} \right)^{\nu - 1} + \left( 1 - \frac{1}{x} \right)^{\nu - 1} \right] < 0~(x \geq 1)$.
Thus, the sequence $\{ b_{\ell} \}_{\ell = 1}^{\infty}$ is monotonically decreasing and bounded.
Then, we know that the series $\sum\limits_{\ell = 1}^{\infty} a_{\ell} b_{\ell}$ is convergent.
Let $\sum\limits_{\ell = 1}^{\infty} a_{\ell} b_{\ell} = C_6$. The constant $C_6$ only dependents on $\alpha$.

Now, we are in position to estimate
\begin{equation*}
\frac{\| sk(G) - G \|_{\infty}}{\| G \|_{\infty}} 
< \frac{ \sum\limits_{\ell = 1}^{N - \left \lfloor \frac{N - 1} {2}\right \rfloor - 2} \left| g_\ell^{(\alpha)} \right|
+ \sum\limits_{\ell = \left \lfloor \frac{N - 1} {2}\right \rfloor + 1}^{N - 2} \left| g_\ell^{(\alpha)} \right| }
{\sum\limits_{\ell = 0}^{N - 2} \left| g_\ell^{(\alpha)} \right|}
< \frac{\sum\limits_{\ell = 1}^{N - 2} \frac{\left( \ell + 1 \right)^{\nu} - \left( \ell - 1 \right)^{\nu}}{\ell^{\gamma}}}
 {\frac{3}{2}\sum\limits_{\ell = 1}^{N - 2} \frac{\left( \ell + 1 \right)^{\nu} - \left( \ell - 1 \right)^{\nu}}{\ell^{\gamma}}
 + \frac{2 \nu}{\alpha N^{\alpha}}}
< \left( \frac{3}{2} + \frac{2 \nu}{\alpha C_6 N^{\alpha}} \right)^{-1},
\end{equation*}
which completes the proof.
\end{proof}

It is interesting that the Strang-type skew-circulant matrix can also
be used as a preconditioner for other Toeplitz-like systems, such as \cite{zhao2018limited}.
For the convenience of the reader, we summarize our Newton's method in Algorithm \ref{alg1}.
\begin{algorithm}[H]
	\caption{Solve $\bm{\psi}^{j + 1}$ from Eq.~\eqref{eq4.1}}
	\begin{algorithmic}[1]
		\STATE {Given maximum number of iterations $maxit$, tolerance $tol_{out}$ and initial vector $\bm{\psi}^{j + 1(0)}$}
		\FOR {$\ell = 1, \cdots, maxit$}
		\STATE {Solve $\mathcal{J}^{j + 1 (\ell)} \Delta \bm{\psi}^{j + 1 (\ell)} = F(\bm{\psi}^{j + 1 (\ell)})$ with a preconditioned Krylov subspace method}
		\STATE {$\bm{\psi}^{j + 1 (\ell + 1)} = \bm{\psi}^{j + 1 (\ell)} - \Delta \bm{\psi}^{j + 1 (\ell)}$}
		\IF {$\frac{\| -\Delta \bm{\psi}^{j + 1 (\ell)} \|}{\| \bm{\psi}^{j + 1 (0)} \|} \leq tol_{out}$}
		\STATE $\bm{\psi}^{j + 1} = \bm{\psi}^{j + 1 (\ell + 1)}$
		\STATE \textbf{break}
		\ENDIF
		\ENDFOR
	\end{algorithmic}
	\label{alg1}
\end{algorithm}

\section{Numerical experiments}
\label{sec5}

Two examples are reported in this section. Example 1 shows the time and space convergence orders of our method mBDF2 \eqref{eq3.2}.
The performance of our preconditioner in Section \ref{sec4} is displayed in Example 2.
In order to illustrate the efficiency of $sk(G)$, the Strang's circulant preconditioner (denoted as $s(G)$) \cite{ng2004iterative, chan2007introduction} is also tested.
The first column of $s(G)$ is
\begin{equation*}
-c_{h}^{(\alpha,\gamma)}\left[
g_0^{(\alpha)}, g_1^{(\alpha)}, \cdots, g_{\left \lfloor \frac{N - 1} {2}\right \rfloor}^{(\alpha)},
g_{N - \left \lfloor \frac{N - 1} {2}\right \rfloor - 2}^{(\alpha)}, \cdots, g_{1}^{(\alpha)} \right]^T~(\textrm{if}~N~\textrm{is~even})
\end{equation*}
and
\begin{equation*}
-c_{h}^{(\alpha,\gamma)} \left[
g_0^{(\alpha)}, g_1^{(\alpha)}, \cdots, g_{\left \lfloor \frac{N - 1} {2}\right \rfloor - 1}^{(\alpha)}, 0,
g_{N - \left \lfloor \frac{N - 1} {2}\right \rfloor - 2}^{(\alpha)}, \cdots, g_{1}^{(\alpha)} \right]^T~(\textrm{if}~N~\textrm{is~odd}).
\end{equation*}
Note that another preconditioner denoted as $\hat{P}_{L}^{j + 1 (\ell)}$ is obtained by just replacing $\hat{S}$ with
\begin{equation*}
\hat{S}_s =  I + \frac{2}{3} \tau \left( \varepsilon^2 + \sigma \tau \right) \left[ s(G) \right]^2
- 2 \tau \bar{\phi}^{j + 1 (\ell)} s(G)
\end{equation*}
in $P_{L}^{j + 1 (\ell)}$.
In this work, we choose the flexible generalized minimal residual method FGMRES(0) \cite{saad2003iterative} to solve \eqref{eq4.2}.
The iteration is terminated if the relative residual error satisfies $\frac{\| \bm{r}^{(k)} \|}{\| \bm{r}^{(0)} \|} \leq 10^{-12}$
or the iteration number is more than $1000$, where $\bm{r}^{(k)}$ denotes the residual vector in the $k$th iteration.
The initial guess is chosen as the zero vector.
In Algorithm \ref{alg1}, the initial vector is chosen as
\begin{equation*}
\bm{\psi}^{j + 1 (0)} =
\begin{cases}
\left[ \bm{\phi}^0, \bm{\mu}^0 \right]^T, & j = 1, \\
\left[ 2\bm{\phi}^{j} - \bm{\phi}^{j - 1}, 2\bm{\mu}^{j} - \bm{\mu}^{j - 1} \right]^T, & j \geq 2,
\end{cases}
\end{equation*}
the number of $maxit$ and $tol_{out}$ are fixed as $200$ and $10^{-12}$, respectively.
Some other notations that will appear later are collected here:
\begin{equation*}
Err_\infty(h, \tau) = \max_{0 \leq i \leq N} \mid e_{i}^{M} \mid, \quad
Err_2(h, \tau) = \| \bm{e}^{M} \|_{2},
\end{equation*}
\begin{equation*}
CO_{\infty, \tau} =\log_{\tau_1/ \tau_2} \frac{Err_\infty(h, \tau_1)}{Err_\infty(h, \tau_2)}, \quad
CO_{2, \tau} =\log_{\tau_1/ \tau_2} \frac{Err_2(h, \tau_1)}{Err_2(h, \tau_2)},
\end{equation*}
\begin{equation*}
CO_{\infty, h} =\log_{h_1/ h_2} \frac{Err_\infty(h_1, \tau)}{Err_\infty(h_2, \tau)}, \quad
CO_{2, h} =\log_{h_1/ h_2} \frac{Err_2(h_1, \tau)}{Err_2(h_2, \tau)}.
\end{equation*}
``Time" is the total CPU time in seconds for solving the nonlinear system \eqref{eq4.2}.
``BS" means that Eq. \eqref{eq4.2} is solved by MATLAB's backslash operator,
``$\mathcal{P}$" and ``$\mathcal{P}_s$" represent that the FGMRES(0) method
with preconditioners $P_{L}^{j + 1 (\ell)}$ and $\hat{P}_{L}^{j + 1 (\ell)}$ to solve \eqref{eq4.2}, respectively.
``Iter1"  and ``Iter2" represent the average numbers of iterations required by Newton's method
and the preconditioned FGMRES(0) (called PFGMRES(0)) method respectively. More precisely,
\begin{equation*}
\mathrm{Iter1} = \frac{1}{M} \sum\limits_{j = 0}^{M - 1} \mathrm{Iter1} (j) \quad \mathrm{and} \quad
\mathrm{Iter2} = \frac{1}{M} \sum\limits_{j = 0}^{M - 1} \sum\limits_{\ell = 1}^{\mathrm{Iter1}(j)}
\frac{\mathrm{Iter2(\ell)}}{\mathrm{Iter1}(j)},
\end{equation*}
where $\mathrm{Iter1}(j)$ represents the number of iterations required by Newton's method for solving Eq.~\eqref{eq4.1},
and $\mathrm{Iter2(\ell)}$ is the number of iterations required
by the PFGMRES(0) method in line 3 of Algorithm \ref{alg1}.

All experiments were performed on a Windows 10 (64 bit) PC-Intel(R) Core(TM) i7-8700k
CPU 3.20 GHz, 16 GB of RAM using MATLAB R2018b.

\subsection{Verification of the convergence rate}
\label{sec5.1}

In the following numerical example we verify the convergence of our scheme \eqref{eq3.2}.
Since the analytical solution of Eq.~\eqref{eq1.1} is difficult to obtain, we proceed as in \cite{wang2019finite}.
An exact solution is constructed artificially by adding appropriate nonzero source terms.

\noindent\textbf{Example 1.}
We consider an inhomogeneous version of Eq.~\eqref{eq1.1}
\begin{equation*}
\begin{cases}
\partial_t \phi(x,t) = -(-\Delta)^{\alpha/2} \mu(x,t) + f(x,t), & \textrm{in} \quad \Omega \times (0,T], \\
\mu(x,t) = \phi^3(x,t) - \phi(x,t) + \varepsilon^2 (-\Delta)^{\alpha/2} \phi(x,t) + \psi(x,t),
& \textrm{in} \quad \Omega \times (0,T], \\
\phi(x,t) = \mu(x,t) = 0, & \textrm{in} \quad \mathbb{R} \setminus \Omega \times (0,T],\\
\phi(x,0) = \phi_0(x), &  \textrm{in} \quad \Omega
\end{cases}
\end{equation*}
with $L = T = 1$, $\varepsilon^2 = 0.1$
and the source terms
\begin{equation*}
\begin{split}
& f(x,t) = \exp(t) \left\{ \left( 1 - x^2 \right)^{3 + \alpha/2}
+ \frac{2^{\alpha} \Gamma(\frac{\alpha + 1}{2}) \Gamma(3 + \alpha/2)}{\sqrt{\pi} \Gamma(3)}
\left[ 1 - 2 \left( \alpha + 1 \right) x^2 + \frac{\left( \alpha + 1 \right) \left( \alpha + 3 \right)}{3} x^4 \right] \right\} \\
& \psi(x,t) = \exp(t) \bigg\{ \left( 1 - x^2 \right)^{2 + \alpha/2}
- \frac{2^{\alpha} \Gamma(\frac{\alpha + 1}{2}) \Gamma(4 + \alpha/2)}{\sqrt{\pi} \Gamma(4)}
\varepsilon^2 \bigg[ 1 - 3 \left( \alpha + 1 \right) x^2 + \left( \alpha + 1 \right) \left( \alpha + 3 \right) x^4 \\
&\qquad\qquad - \frac{\left( \alpha + 1 \right) \left( \alpha + 3 \right) \left( \alpha + 5 \right)}{15}  x^6 \bigg] \bigg\}
- \exp(3 t) \left(1 - x^2 \right)^{9 + 3 \alpha/2} + \exp(t) \left( 1 - x^2 \right)^{3 + \alpha/2}.
\end{split}
\end{equation*}
The exact solution $(\phi(x,t), \mu(x,t))$ is given by
\begin{equation*}
\phi(x,t) =
\begin{cases}
\exp(t) \left( 1 - x^2 \right)^{3 + \alpha/2}, & (x,t) \in \Omega \times (0,T], \\
0, & (x,t) \in \mathbb{R} \setminus \Omega \times (0,T],
\end{cases}
\end{equation*}
\begin{equation*}
\mu(x,t) =
\begin{cases}
\exp(t) \left( 1 - x^2 \right)^{2 + \alpha/2}, & (x,t) \in \Omega \times (0,T], \\
0, & (x,t) \in \mathbb{R} \setminus \Omega \times (0,T].
\end{cases}
\end{equation*}
\begin{table}[t]\tabcolsep=2mm
	\caption{Numerical errors and the time convergence orders for Example 1
		with $\sigma = 1/16$ and $N = 2048$.}
	\centering
	\begin{tabular}{cccccc}
		\hline
		$\alpha$ & $M$ & $Err_\infty(h,\tau)$ & $CO_{\infty, \tau}$ & $Err_2(h,\tau)$ & $CO_{2, \tau}$ \\
		\hline
		1.2 & 8 & 1.2161E-02 & -- & 9.5625E-03 -- \\
	          & 16 & 3.5424E-03 & 1.7795 & 2.8196E-03 & 1.7619 \\
	          & 32 & 9.4599E-04 & 1.9048 & 7.5053E-04 & 1.9095 \\
	          & 64 & 2.4417E-04 & 1.9539 & 1.9338E-04 & 1.9565 \\
	          & 128 & 6.3186E-05 & 1.9502 & 4.9877E-05 & 1.9550 \\
		\hline
		1.5 & 8 & 7.3630E-03 & -- & 6.7167E-03 & -- \\
 	          & 16 & 1.8758E-03 & 1.9728 & 1.7189E-03 & 1.9663 \\
	          & 32 & 	4.7914E-04 & 1.9690 & 4.3932E-04 & 1.9681 \\
	          & 64 & 1.2258E-04 & 1.9667 & 1.1224E-04 & 1.9687 \\
	          & 128 & 3.1828E-05 & 1.9454 & 2.9011E-05 & 1.9519 \\
		\hline
		1.9 & 8 & 2.9097E-03 & -- & 2.8916E-03 & -- \\
	          & 16 & 7.4800E-04 & 1.9598 & 7.7665E-04 & 1.8965 \\
	          & 32 & 1.9194E-04 & 1.9624 & 1.9872E-04 & 1.9665 \\
	          & 64 & 4.8714E-05 & 1.9782 & 4.9951E-05 & 1.9922 \\
	          & 128 & 1.2306E-05 & 1.9850 & 1.2323E-05 & 2.0192 \\
		\hline
	\end{tabular}
	\label{tab1}
\end{table}
\begin{table}[H]\tabcolsep=2mm
	\caption{Numerical errors and the space convergence orders for Example 1
		with $\sigma = 1/16$ and $M = 1024$.}
	\centering
	\begin{tabular}{cccccc}
		\hline
		$\alpha$ & $N$ & $Err_\infty(h,\tau)$ & $CO_{\infty, h}$ & $Err_2(h,\tau)$ & $CO_{2, h}$ \\
		\hline
        1.2 & 16 & 4.2478E-02 & -- & 2.9791E-02 & -- \\
	          & 32 & 1.0030E-02 & 2.0824 & 6.8923E-03 & 2.1118 \\
	          & 64 & 2.3864E-03 & 2.0714 & 1.6438E-03 & 2.0680 \\
	          & 128 & 5.7878E-04 & 2.0437 & 3.9746E-04 & 2.0482 \\
	          & 256 & 1.4162E-04 & 2.0310 & 9.7434E-05 & 2.0283 \\
		\hline
		1.5 & 16 & 3.8204E-02 & -- & 2.8025E-02 & -- \\
	          & 32 & 	8.5079E-03 & 2.1668 & 6.2842E-03 & 2.1569 \\
	          & 64 & 1.9561E-03 & 2.1208 & 1.4492E-03 & 2.1165 \\
	          & 128 & 4.5532E-04 & 2.1030 & 3.3786E-04 & 2.1008 \\
	          & 256 & 1.0742E-04 & 2.0836 & 7.9805E-05 & 2.0819 \\
		\hline
		1.9 & 16 & 3.4629E-02 & -- & 2.8408E-02 & -- \\
	          & 32 & 8.3878E-03 & 2.0456 & 6.5566E-03 & 2.1153 \\
	          & 64 & 2.0006E-03 & 2.0679 & 1.5657E-03 & 2.0661 \\
	          & 128 & 4.8060E-04 & 2.0575 & 3.7631E-04 & 2.0568 \\
	          & 256 & 1.1556E-04 & 2.0562 & 9.0526E-05 & 2.0555 \\
		\hline
	\end{tabular}
	\label{tab2}
\end{table}
\begin{figure}[H]
	\centering
	\includegraphics[width=3.0in,height=3.0in]{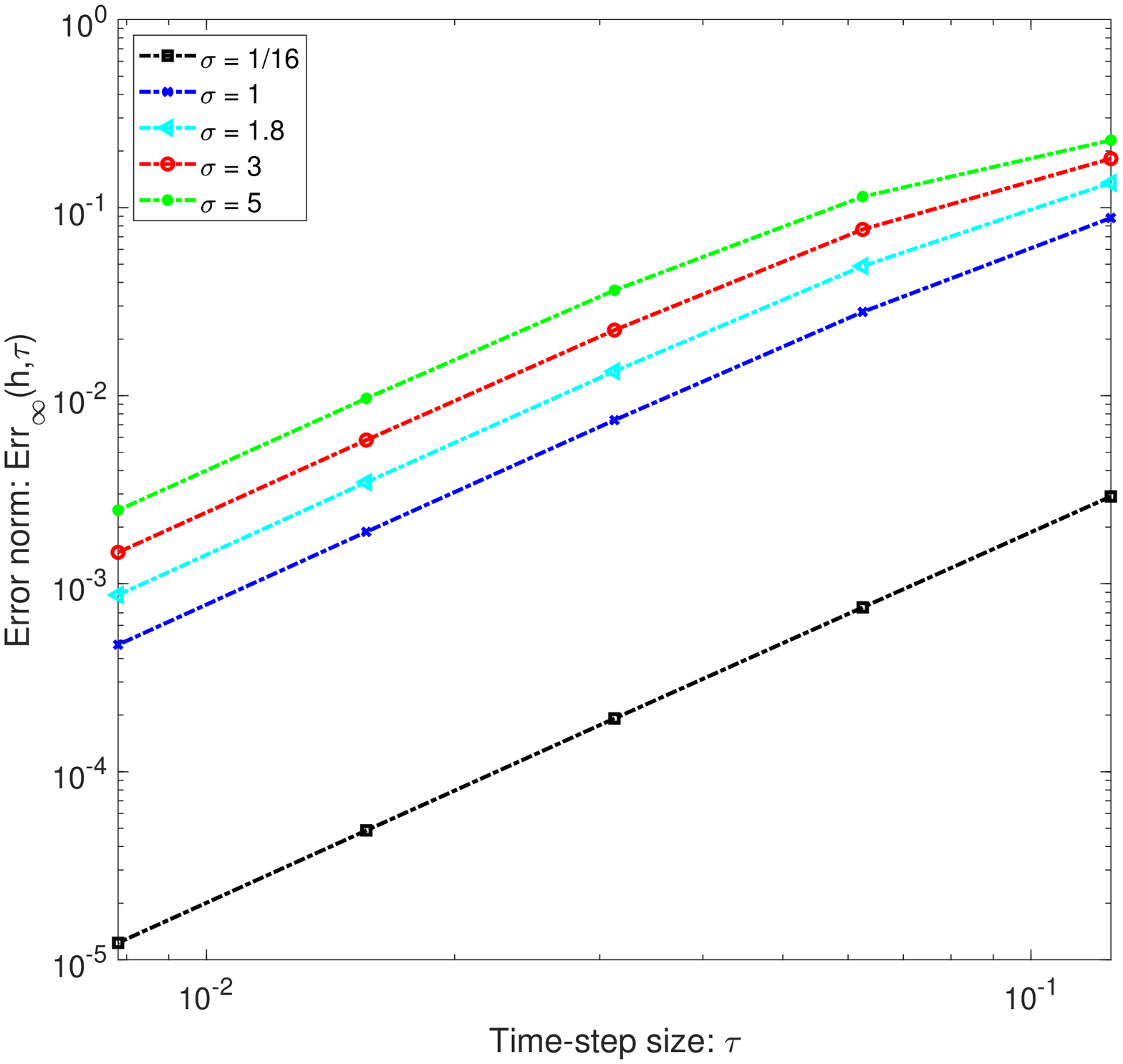}
	\includegraphics[width=3.0in,height=3.0in]{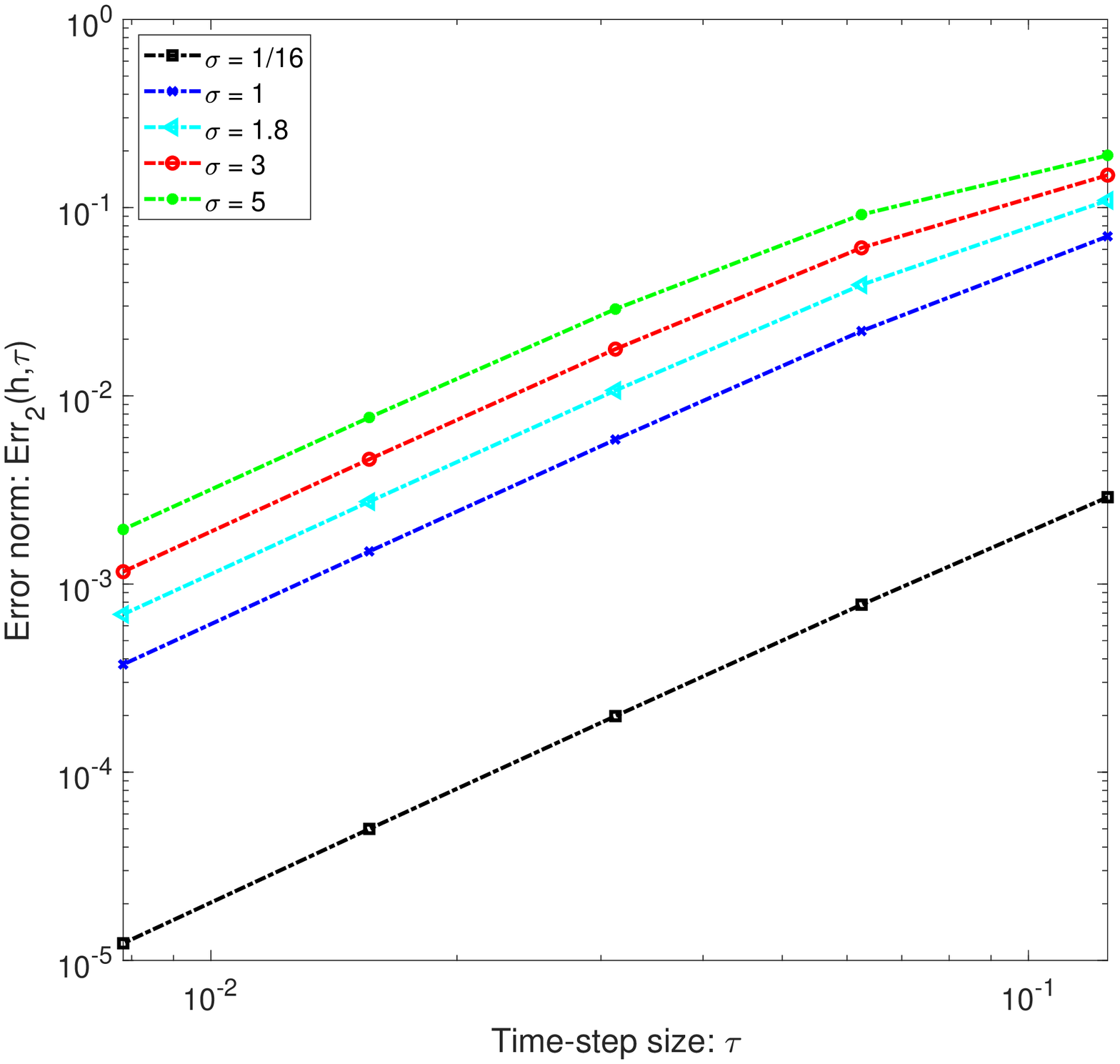}
	\caption{The numerical errors for Example 1 with $\alpha=1.9$, $h=1/1024$ and different values of $\sigma$ and $\tau$.}
	\label{fig1}
\end{figure}

The errors in Table \ref{tab1} decrease steadily with increasing $M$ the number of time steps,
and the convergence order in time is $2$ as expected.
According to the work \cite{duo2018novel}, since $\left( 1 - x^2 \right)^{3 + \alpha/2} \in C^{3, \alpha/2} \left( \Omega \right)$
(see the Appendix for a lower space regularity case),
the spatial convergence order is also $2$ (i.e., $p = 2$). Table \ref{tab2} reports the errors and the convergence order in space.
It shows that for fixed $M = 1024$, the convergence order in space is indeed $2$.
This is in good agreement with our theoretical analysis in Section \ref{sec3.2}.

On the other hand, it should be discussed why using $\sigma = 1/16$ in our paper.
From Section \ref{sec3.2}, we know that if $\sigma \geq 1/16$, our scheme \eqref{eq3.2} is energy stable and convergence.
However, this does not mean that every $\sigma \geq 1/16$ can get small numerical errors and a `good' temporal convergence order.
Fig.\ref{fig1} is plotted to discuss the effect of different values of $\sigma$ ($\sigma=1/16,1,1.8,3,5$)
on the temporal convergence order of \eqref{eq3.2},
where $\alpha=1.9$, $h=1/1024$ and $\tau=1/8,1/16,1/32,1/64,1/128$.
It can be observed that the mBDF2 scheme \eqref{eq3.2} with $\sigma=1/16$ yields the smallest numerical errors
and a `good' slope. Thus, we use $\sigma=1/16$ in this article.

\subsection{Fast implementation}
\label{sec5.2}

In this subsection, the performance of our preconditioning technique
(i.e., Algorithm \ref{alg1} with preconditioner $P_{L}^{j + 1 (\ell)}$) is reported
and compared with the preconditioner $\hat{P}_{L}^{j + 1 (\ell)}$.
\begin{figure}[H]
	\centering
	\subfigure[$\phi(x,T)$]
	{\includegraphics[width=3.0in,height=2.2in]{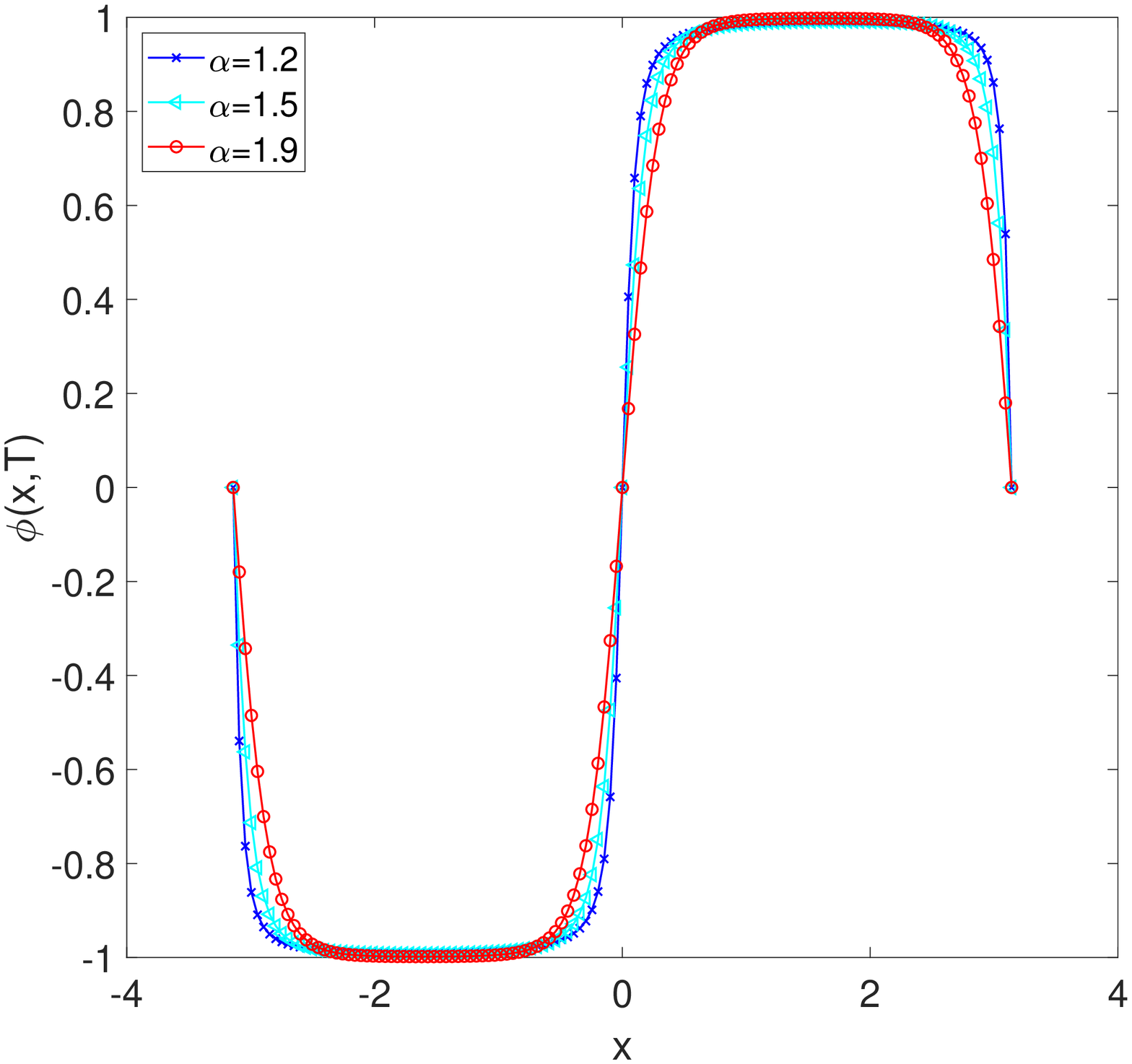}}
	\subfigure[$\alpha = 1.2$]
	{\includegraphics[width=3.0in,height=2.25in]{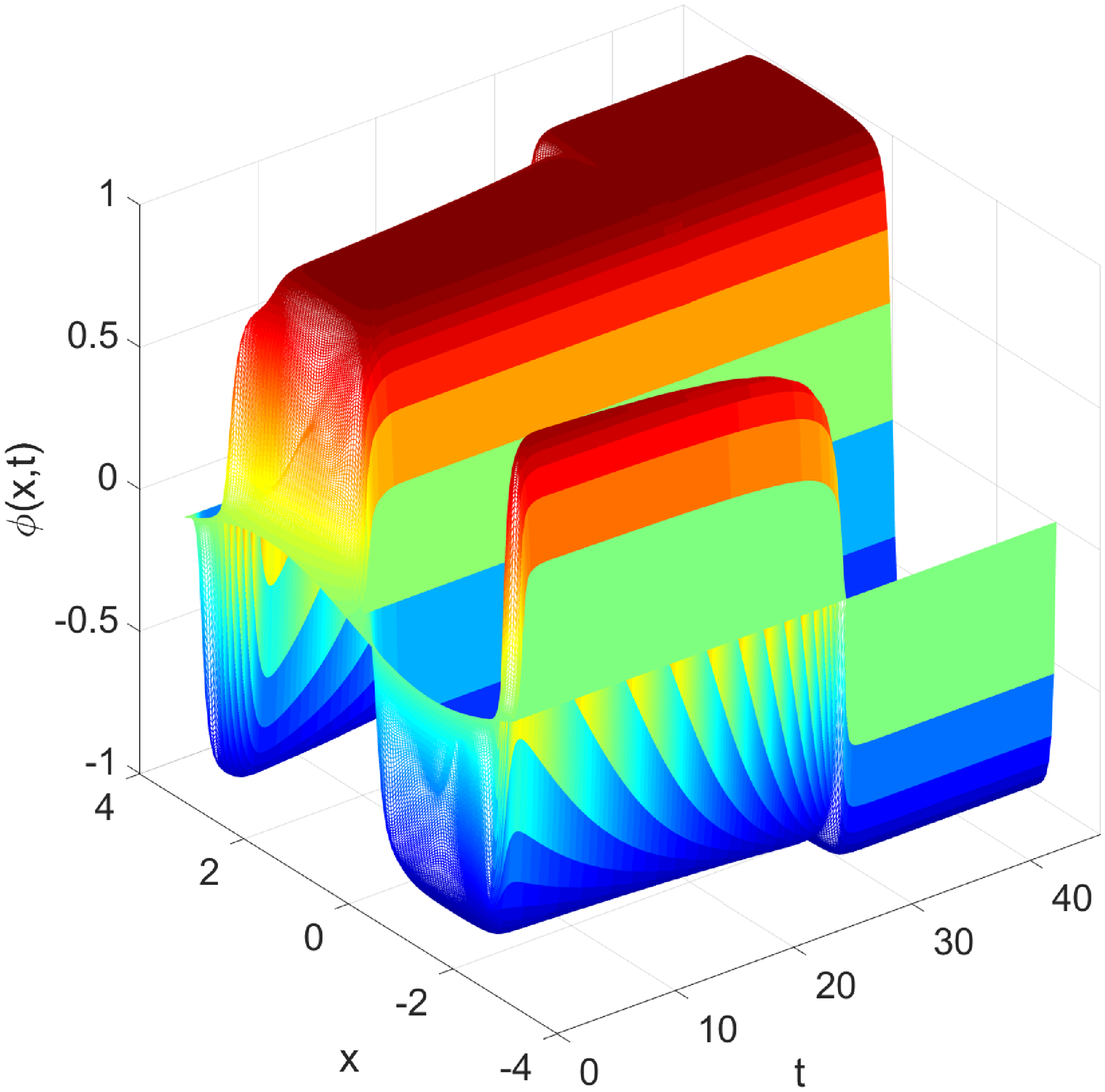}}\\
	\subfigure[$\alpha = 1.5$]
	{\includegraphics[width=3.0in,height=2.2in]{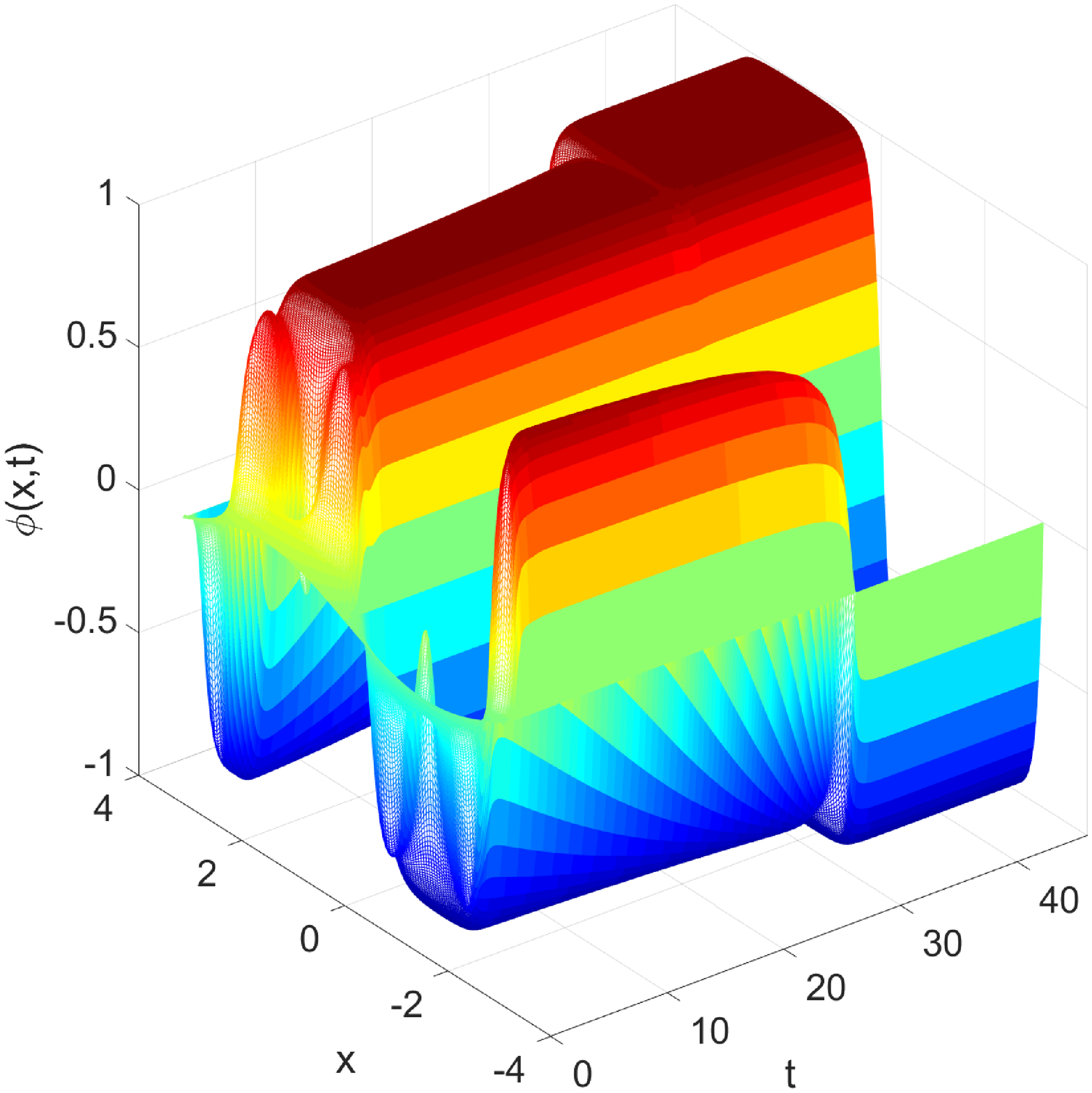}}
	\subfigure[$\alpha = 1.9$]
	{\includegraphics[width=3.0in,height=2.2in]{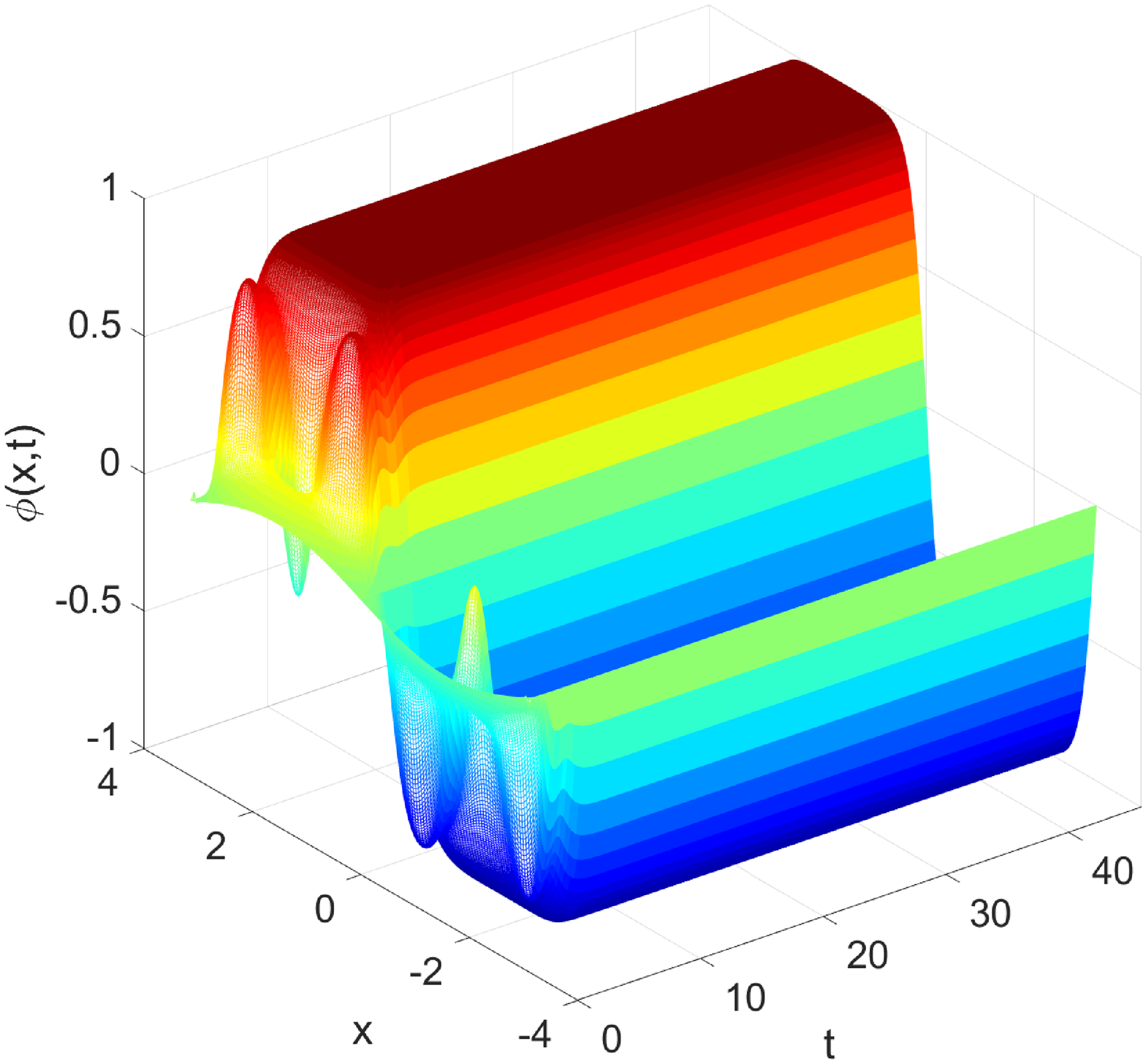}}
	\caption{Comparison of the numerical solutions with $(M, N) = (2048, 128)$ and different $\alpha$ for Example 2.}
	\label{fig2}
\end{figure}

\noindent \textbf{Example 2.} Consider Eq.~\eqref{eq1.1} with $L = \pi$, $\varepsilon^2 = 0.05$
and initial value $ \phi_0(x) = 0.1 \sin(x)$.

Table \ref{tab3} shows that compared with the direct BS method, the two preconditioned iterative methods (i.e., $\mathcal{P}$ and $\mathcal{P}_s$)
greatly reduce the computational cost. This results in a much smaller CPU time.
For small $\alpha = 1.2, 1.5$, the numbers $\mathrm{Iter2}$ of the methods $\mathcal{P}$ and $\mathcal{P}_s$ are not strongly influenced by the mesh size.
Comparing with the method $\mathcal{P}_s$, the number $\mathrm{Iter2}$ of $\mathcal{P}$ is slightly smaller.
However, the numbers $\mathrm{Iter2}$ of both preconditioned iterative methods are not very satisfactory for $\alpha = 1.9$.
Thus, a more efficient preconditioner for solving \eqref{eq4.1} should be considered in our future work.
For test problems with $\alpha = 1.5, 1.9$, the CPU time of method $\mathcal{P}$ is smaller than that of method $\mathcal{P}_s$.
On the other hand, when $\alpha = 1.2$, the CPU time of $\mathcal{P}_s$ is smaller than that of $\mathcal{P}$.
Notice that both methods have almost the same value of $\mathrm{Iter2}$. Thus,
the reason may be that method $\mathcal{P}$ needs more arithmetics with complex numbers.
\begin{table}[t]\footnotesize\tabcolsep=2.0pt
	\begin{center}
		\caption{Results of different methods when $T = 46$ and $M = N$ for Example 2.}
		\centering
		\begin{tabular}{cccccccc}
			\hline
			& & \multicolumn{2}{c}{\rm{BS}} & \multicolumn{2}{c}{$\mathcal{P}_s$}
			& \multicolumn{2}{c}{$\mathcal{P}$} \\
			[-2pt] \cmidrule(lr){3-4} \cmidrule(lr){5-6} \cmidrule(lr){7-8}\\ [-11pt]
			$\alpha$ & $N$ & $\mathrm{Iter1}$ & $\mathrm{Time}$
			&($\mathrm{Iter1}$, $\mathrm{Iter2}$) & $\mathrm{Time}$
			& ($\mathrm{Iter1}$, $\mathrm{Iter2}$) & $\mathrm{Time}$ \\
			\hline
			1.2 & 64 & 3.4 & 0.085 & (3.4, 14.2) & 0.195 & 	(3.4, 13.9) & 0.199 \\
	              & 128 & 3.2 & 0.741 & 	(3.2, 16.1) & 0.669 & (3.2, 15.5) & 0.644 \\
	              & 256 & 2.8 & 4.943 & 	(2.8, 16.6) & 1.778 & (2.8, 16.0) & 1.826 \\
	              & 512 & 2.7 & 49.734 & (2.7, 16.1) & 8.093 & 	(2.7, 15.8) & 8.489 \\
	              & 1024 & 2.7 & 486.649 & (2.7, 15.6) & 19.975 & (2.7, 15.6) & 20.636 \\
	              & 2048 & 2.6 & 5392.398 & (2.6, 15.2) & 108.659 & (2.6, 15.3) & 116.275 \\
			\\
			1.5 & 64 & 2.6 & 0.064 & (2.6, 14.4) & 0.146 & 	(2.6, 14.1) & 0.148 \\
	              & 128 & 2.6 & 0.595 & 	(2.6, 16.3) & 0.553 & (2.6, 16.0) & 0.537 \\
	              & 256 & 2.6 & 4.592 & 	(2.6, 16.8) & 1.704 & (2.6, 16.5) & 1.692 \\
	              & 512 & 2.7 & 48.418 & (2.7, 16.9) & 8.410 & 	(2.7, 16.6) & 8.858 \\
	              & 1024 & 2.7 & 493.932 & (2.7, 17.4) & 22.012 & (2.7, 16.3) & 21.640 \\
	              & 2048 &	2.5 & 5236.563 & (2.5, 19.3) & 130.851 & (2.5, 17.1) & 124.670 \\
			\\
			1.9 & 64 & 2.5 & 0.061 & (2.5, 15.2) & 0.152 & 	(2.5, 14.9) & 0.150 \\
	               & 128 & 1.8 & 0.386 & (1.8, 16.5) & 0.377 & (1.8, 16.3) & 0.360 \\
	               & 256 & 1.6 & 2.794 & (1.6, 18.7) & 1.171 & (1.6, 17.0) & 1.081 \\
	               & 512 & 1.6 & 28.695 & (1.6, 25.4) & 7.832 & (1.6, 21.3) & 6.576 \\
	               & 1024 & 1.5 & 	277.725 & 	(1.5, 33.8) & 26.981 & (1.5, 27.5) & 20.660 \\
	               & 2048 & 1.5 & 3099.148 & (1.5, 172.8) & 1053.756 & (1.5, 169.0) & 1042.039 \\
			\hline
		\end{tabular}
		\label{tab3}
	\end{center}
\end{table}
\begin{figure}[H]
	\centering
	\includegraphics[width=3.5in,height=3.0in]{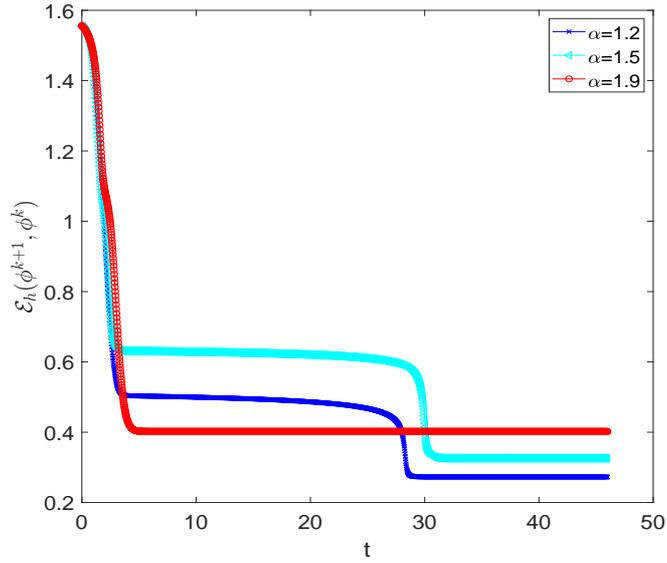}
	\caption{Energy dissipation with $(M, N) = (2048, 128)$ and $T = 46$ for Example 2 for different values of $\alpha$.}
	\label{fig3}
\end{figure}
\begin{figure}[H]
	\centering
	\subfigure[Eigenvalues of $\mathcal{J}^{2(0)}$]
	{\includegraphics[width=3.0in,height=2.5in]{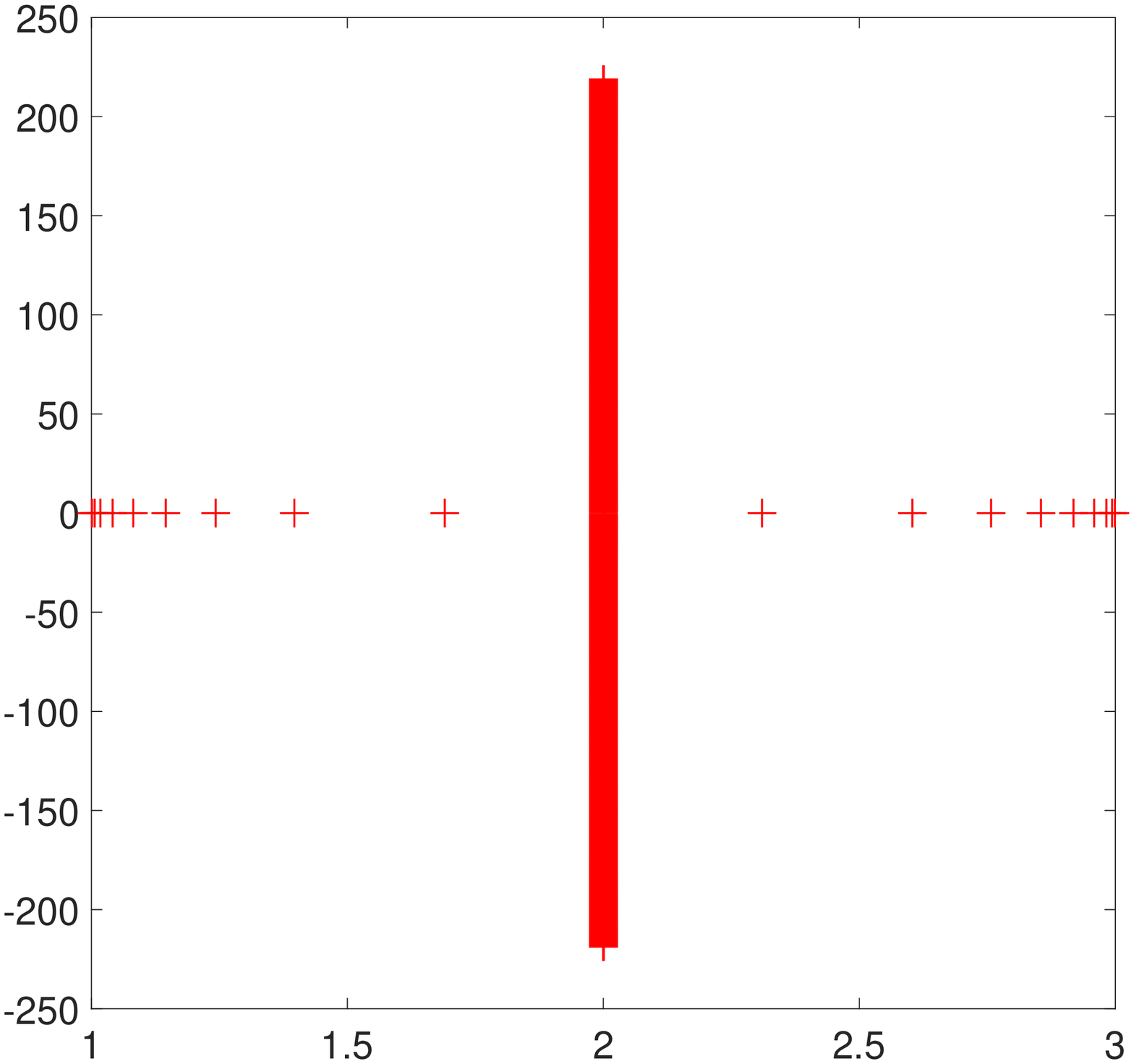}}
	\subfigure[Eigenvalues of $\left( P_{L}^{2(0)} \right)^{-1} \mathcal{J}^{2(0)}$ ({\color{blue} $*$})
	and $\left( \hat{P}_{L}^{2(0)} \right)^{-1} \mathcal{J}^{2(0)}$ ({\color{magenta} $\circ$})]
	{\includegraphics[width=3.0in,height=2.56in]{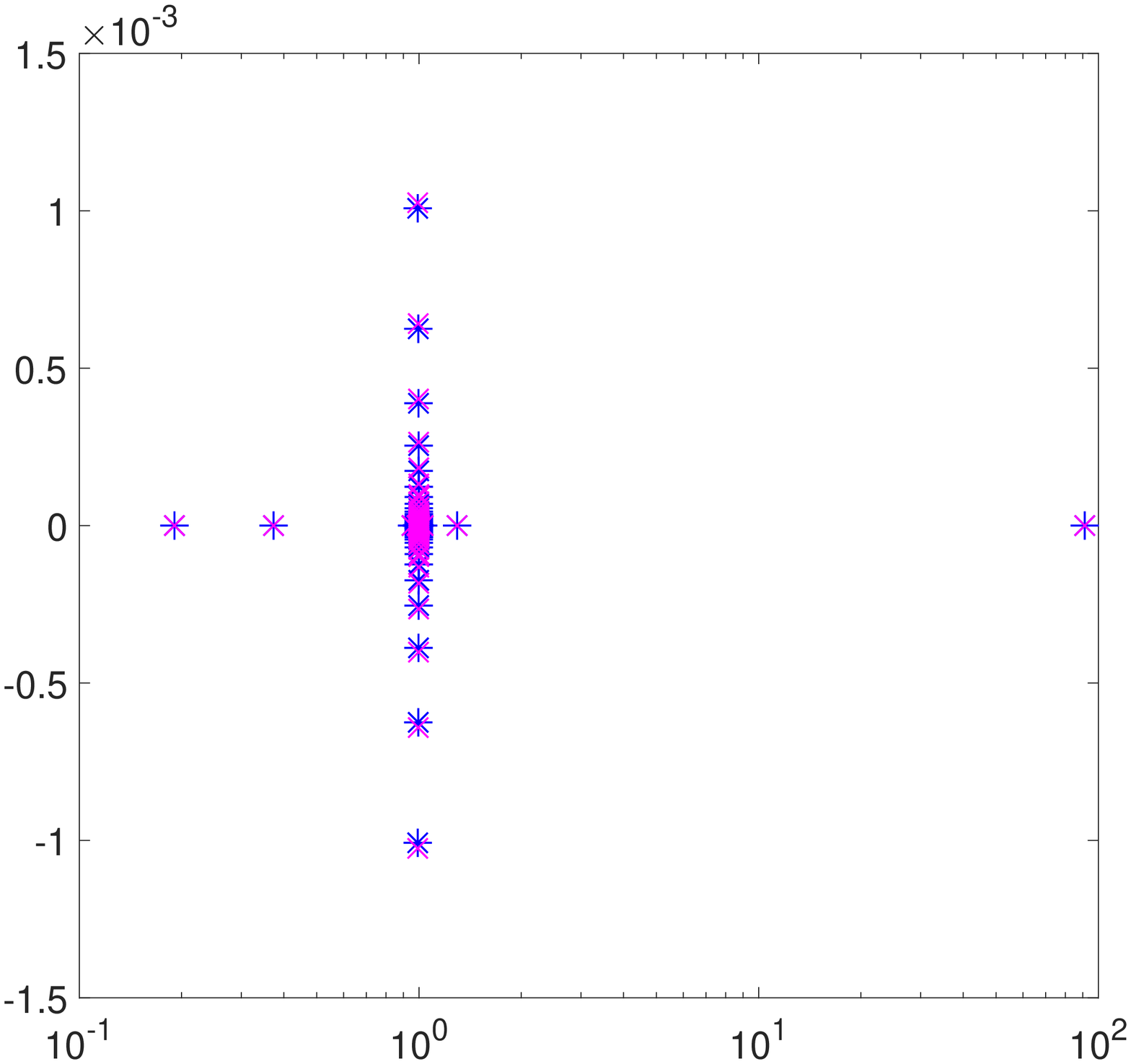}}\\
	\subfigure[Eigenvalues of $G$]
	{\includegraphics[width=3.0in,height=2.5in]{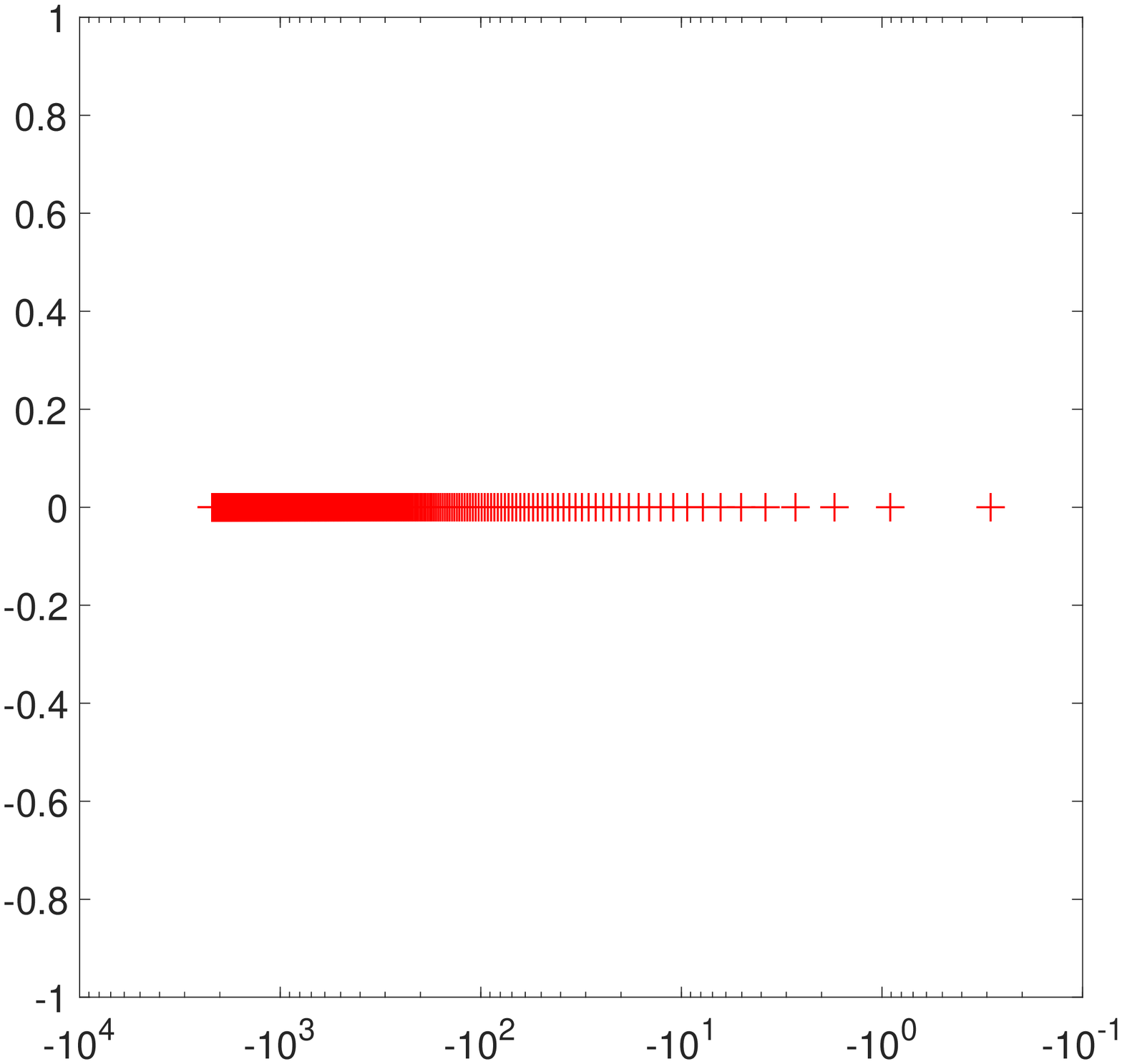}}
	\subfigure[Eigenvalues of $\left( sk(G) \right)^{-1} G$ ({\color{blue} $*$})
	and $\left( s(G) \right)^{-1} G$ ({\color{magenta} $\circ$})]
	{\includegraphics[width=3.0in,height=2.5in]{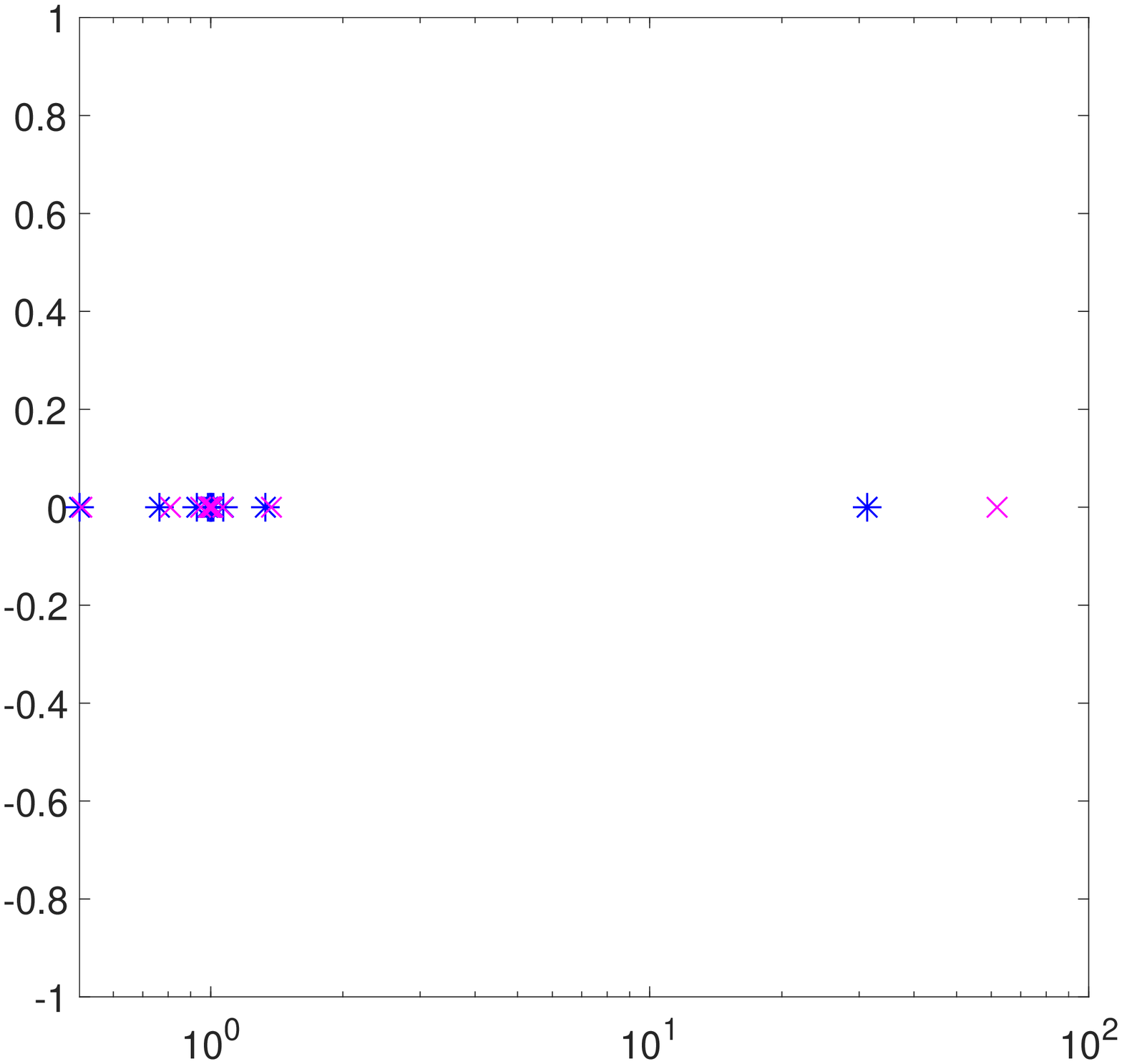}}
	\caption{Spectra of $\mathcal{J}^{2(0)}$, $\left( P_{L}^{2(0)} \right)^{-1} \mathcal{J}^{2(0)}$,
		$\left( \hat{P}_{L}^{2(0)} \right)^{-1} \mathcal{J}^{2(0)}$, $G$,
		$\left( sk(G) \right)^{-1} G$ and $\left( s(G) \right)^{-1} G$ for $\alpha = 1.5$ and $M = N = 512$ in Example 2.}
	\label{fig4}
\end{figure}

Fig.~\ref{fig2}(a) shows the numerical solutions at $T = 46$ for different values of $\alpha$,
and the increasing effect of the interface thickness as $\alpha$ increases.
The time evolution of the model \eqref{eq1.1} for varying $\alpha$ is shown in Figs.~\ref{fig2}(b-d).
It is clearly visible from it that the larger $\alpha$, the shorter the lifetime of the unstable interface,
eventually becoming fully stabilized due to the long-range dependence and the heavy-tailed influence of the fractional diffusion process.
Fig.~\ref{fig3} shows how the energy of SFCH \eqref{eq1.1} is dissipated. This also confirms the analytical result in Theorem \ref{th3.1}.
The spectra of $\mathcal{J}^{2(0)}$, $\left( P_{L}^{2(0)} \right)^{-1} \mathcal{J}^{2(0)}$,
$\left( \hat{P}_{L}^{2(0)} \right)^{-1} \mathcal{J}^{2(0)}$, $G$,
$\left( sk(G) \right)^{-1} G$ and $\left( s(G) \right)^{-1} G$ are drawn in Fig.~\ref{fig4}.
In Fig.~\ref{fig4}(b), the eigenvalues of $\left( P_{L}^{2(0)} \right)^{-1} \mathcal{J}^{2(0)}$
are slightly more clustered than
$\left( \hat{P}_{L}^{2(0)} \right)^{-1} \mathcal{J}^{2(0)}$. In Fig.~\ref{fig4}(d),
all eigenvalues of $\left( sk(G) \right)^{-1} G$ and $\left( s(G) \right)^{-1} G$ are clustered around $1$, except for several outliers.
In a word, Table \ref{tab3} and Fig.~\ref{fig4} indicate that our preconditioner $P_{L}^{j + 1(\ell)}$ performs slightly better than
$\hat{P}_{L}^{j + 1(\ell)}$.

\section{Concluding remarks}
\label{sec6}

In this article, we propose and analyse an energy stable finite difference nonlinear scheme \eqref{eq3.2} with second-order accuracy in time
to approximate the SFCH \eqref{eq1.1}. For the temporal discretization, the BDF2 scheme combined
with a second-order extrapolation formula applied to the concave term is applied.
However, the resulting scheme is not energy stable. To save the energy stability of the numerical scheme, an artificial
second-order Douglas-Dupont regularization is considered resulting in the scheme mBDF2. The energy stability and convergence of the mBDF2 scheme \eqref{eq3.2}
are proved. To reduce the computational cost, a preconditioning technique is designed to accelerate the solution of the involved nonlinear system.
Numerical examples strongly support the efficiency of our preconditioning technique.
For large $\alpha$ (i.e., $\alpha = 1.9$), the behaviour of our preconditioner is less satisfactory.
Another more efficient preconditioners should be considered in our future work,
such as the Sine transform-based preconditioner \cite{ng2004iterative} and the two-stage preconditioner \cite{roy2019block}.
Furthermore, our method can easily be extended to the variable time-step version \cite{chen2019second} and to the higher dimensional case.

\section*{Acknowledgments}
\addcontentsline{toc}{section}{Acknowledgments}
\label{sec7}

\textit{This research is supported by the National Natural Science Foundation of China
(Nos.~61876203, 61772003, 11801463 and 11701522) and the Fundamental Research Funds for
the Central Universities (No.~JBK1902028). The first author is also supported by the China Scholarship Council.}

\section*{Appendix}
\addcontentsline{toc}{section}{Appendix}
\label{sec8}
In this appendix, the convergence orders of \eqref{eq3.2} with lower spatial regularity are provided.

\noindent\textbf{Example A.} Consider Example 1 with the exact solution and source terms given by
\begin{equation*}
\phi(x,t) = \mu(x,t) =
\begin{cases}
\exp(t) \left( 1 - x^2 \right)^{1 + \alpha/2}, & (x,t) \in \Omega \times (0,T], \\
0, & (x,t) \in \mathbb{R} \setminus \Omega \times (0,T],
\end{cases}
\end{equation*}
\begin{equation*}
f(x,t) = \exp(t) \left\{ \left( 1 - x^2 \right)^{1 + \alpha/2}
+ \frac{2^{\alpha} \Gamma(\frac{\alpha + 1}{2}) \Gamma(2 + \alpha/2)}{\sqrt{\pi} \Gamma(2)}
\left[ 1 - \left( \alpha + 1 \right) x^2 \right] \right\}
\end{equation*}
and
\begin{equation*}
\begin{split}
& \psi(x,t) = \exp(t) \left\{ \left( 1 - x^2 \right)^{1 + \alpha/2}
- \frac{2^{\alpha} \Gamma(\frac{\alpha + 1}{2}) \Gamma(2 + \alpha/2)}{\sqrt{\pi} \Gamma(2)}
\varepsilon^2 \left[ 1 - \left( \alpha + 1 \right) x^2 \right] \right\} \\
&\qquad\qquad - \exp(3 t) \left(1 - x^2 \right)^{3 + 3 \alpha/2} + \exp(t) \left( 1 - x^2 \right)^{1 + \alpha/2},
\end{split}
\end{equation*}
respectively.
\begin{table}[t]\tabcolsep=2mm
	\caption{Numerical errors and the time convergence orders for Example A
		with $\sigma = 1/16$ and $N = 2048$.}
	\centering
	\begin{tabular}{cccccc}
		\hline
		$\alpha$ & $M$ & $Err_\infty(h,\tau)$ & $CO_{\infty, \tau}$ & $Err_2(h,\tau)$ & $CO_{2, \tau}$ \\
		\hline
		1.2 & 16 & 1.6538E-02 & -- & 9.1950E-03 & -- \\
 	           & 32 & 4.5744E-03 & 1.8541 & 	2.5365E-03 & 1.8580 \\
	           & 64 & 1.1803E-03 & 1.9544 & 	6.5411E-04 & 1.9552 \\
	           & 128 & 2.9897E-04 & 1.9811 & 1.6571E-04 & 1.9809 \\
	           & 256 & 7.5839E-05 & 1.9790 & 4.2069E-05 & 1.9778 \\
		\hline
		1.5 & 16 & 1.2238E-02 & -- & 7.6299E-03 & -- \\
	          & 32 & 3.2328E-03 & 1.9205 & 2.0076E-03 & 1.9262 \\
	          & 64 & 8.2255E-04 & 1.9746 & 5.1029E-04 & 1.9761 \\
	          & 128 & 2.0766E-04 & 1.9859 & 1.2882E-04 & 1.9860 \\
	          & 256 & 5.2845E-05 & 1.9744 & 3.2809E-05 & 1.9732 \\
		\hline
		1.9 & 16 & 9.5353E-03 & -- & 6.6851E-03 & -- \\
	          & 32 & 2.5625E-03 & 1.8957 & 1.7846E-03 & 1.9053 \\
	          & 64 & 6.5049E-04 & 1.9780 & 4.5254E-04 & 1.9795 \\
	          & 128 & 1.6405E-04 & 1.9874 & 1.1410E-04 & 1.9877 \\
	          & 256 & 4.1687E-05 & 1.9765 & 2.8987E-05 & 1.9768 \\
		\hline
	\end{tabular}
	\label{tabX1}
\end{table}
\begin{table}[H]\tabcolsep=2mm
	\caption{Numerical errors and the space convergence orders for Example A
		with $\sigma = 1/16$ and $M = 1024$.}
	\centering
	\begin{tabular}{cccccc}
		\hline
		$\alpha$ & $N$ & $Err_\infty(h,\tau)$ & $CO_{\infty, h}$ & $Err_2(h,\tau)$ & $CO_{2, h}$ \\
		\hline
		1.2 & 16 & 1.9992E-02 & -- & 1.1611E-02 & -- \\
	          & 32 & 4.9710E-03 & 2.0078 & 3.0365E-03 & 1.9350 \\
	          & 64 & 1.2200E-03 & 2.0267 & 7.6148E-04 & 1.9955 \\
	          & 128 & 3.0127E-04 & 2.0178 & 1.8847E-04 & 2.0145 \\
	          & 256 & 7.6696E-05 & 1.9738 & 4.7859E-05 & 1.9775 \\
		\hline
		1.5 & 16 & 1.5936E-02 & -- & 1.1349E-02 & -- \\
	          & 32 & 4.1936E-03 & 1.9260 & 3.0132E-03 & 1.9132 \\
	          & 64 & 1.0944E-03 & 1.9380 & 7.7984E-04 & 1.9500 \\
	          & 128 & 2.7917E-04 & 1.9709 & 1.9949E-04 & 1.9669 \\
	          & 256 & 7.2450E-05 & 1.9461 & 5.1615E-05 & 1.9505 \\
		\hline
		1.9 & 16 & 1.2638E-02 & -- & 1.0472E-02 & -- \\
	           & 32 & 3.1667E-03 & 1.9967 & 	2.6441E-03 & 1.9857 \\
	           & 64 & 7.9970E-04 & 1.9854 & 	6.6906E-04 & 1.9826 \\
	           & 128 & 2.0366E-04 & 1.9733 & 1.6975E-04 & 1.9787 \\
	           & 256 & 5.3045E-05 & 1.9409 & 4.3785E-05 & 1.9549 \\
		\hline
	\end{tabular}
	\label{tabX2}
\end{table}

The errors and the convergence orders are listed in Tables \ref{tabX1} and \ref{tabX2}. They show that
the convergence order of our scheme \eqref{eq3.2} is $\mathcal{O}(\tau^2 + h^2)$.
Comparing with the work \cite{duo2018novel}, the convergence order of space is higher than $1 - \frac{\alpha}{2}$.

\section*{References}
\addcontentsline{toc}{section}{References}
\bibliography{references}

\end{document}